\documentclass[12pt]{article}
\usepackage{mathrsfs}
\usepackage{amsfonts}
\usepackage{amsthm,amsmath,amssymb,anysize,longtable}
\usepackage{tikz}
\usepackage{float}
\usepackage{hyperref}
\usepackage{xcolor}
\usepackage{textcomp}
\usepackage{makecell}
\usepackage{longtable}
\usepackage{array}
\usepackage{caption}
\usepackage{float}
\usepackage{booktabs}
\usepackage{enumitem}

\newtheorem{theorem}{Theorem}
\newtheorem{lemma}[theorem]{Lemma}
\newtheorem{proposition}[theorem]{Proposition}

\newtheorem{example}[theorem]{Example}
\newtheorem{definition}[theorem]{Definition}
\newtheorem{corollary}[theorem]{Corollary}
\usepackage{xcolor}

\usepackage[numbers,sort&compress]{natbib}
\usepackage{amsmath}
\allowdisplaybreaks[3]
\marginsize{4.2cm}{4.2cm}{3.6cm}{3cm}
\numberwithin{equation}{section}

\author{\textsc{Baojin Zhang and Liming Tang\footnote{corresponding author}}\\
\small{School of Mathematical Sciences}\\
\small{Harbin Normal University}\\
\small{150025 Harbin, China}\\
\small{E-mail: limingtang@hrbnu.edu.cn}}

\date{ }
\date{ }
\usepackage[symbol]{footmisc}

\begin{document}

\thispagestyle{empty}

\noindent{\Large
The Solvabilizers and Solvable Graphs in Lie Superalgebras}
 \footnote{
The  work is supported by  the NSF of Hei Longjiang Province (No. LH2024A014)
}

	\bigskip
	
	 \bigskip

\begin{center}	
	{\bf
		
    Baojin Zhang\footnote{School of Mathematical Sciences, Harbin Normal University, 150025 Harbin, China; \ 2024400038@stu.hrbnu.edu.cn},
   \&
Liming Tang\footnote{School of Mathematical Sciences, Harbin Normal University, 150025 Harbin, China; \ limingtang@hrbnu.edu.cn}\footnote{corresponding author}}
\end{center}

 \begin{quotation}
{\small\noindent \textbf{Abstract}:
In this paper, we introduce the solvabilizer and the solvable graph for a Lie superalgebra and establish their basic properties.  Then we define a category which links Lie superalgebras to their solvable substructures.
Afterwards, we prove that the solvable graph is one of the isomorphic invariants of the Lie superalgebras.
Furthermore, we introduce the solvability measure,
which can reflect the degree of solvability of Lie superalgebras.

\medskip
 \vspace{0.05cm} \noindent{\textbf{Keywords}}:
Lie superalgebra; solvabilizer; graph theory.

\medskip

\vspace{0.05cm} \noindent \textbf{Mathematics Subject Classification
2020}: 17B05, 17B30, 18A05}
\end{quotation}
 \medskip

\section*{Introduction}
\ \ \
As a $\mathbb{Z}_{2}$-graded generalization of Lie algebras, the systematic theory of Lie superalgebras was first established by V. G. Kac \cite{KVG}.
Lie superalgebras play a vital role in the field of supersymmetry in physics, characterizing bosonic symmetries and fermionic symmetries.
Solvable Lie superalgebras are important special types of Lie superalgebras. They are defined by a sequence of subsuperalgebras that
eventually becomes trivial. These superalgebras are closely related to nilpotent structures and are widely studied in algebraic structure theory.

The study of local properties of algebraic structures has grown rapidly in recent decades. Such investigations are often
 closely related to graph theory which reveal deep connections between algebraic invariants and
topological properties. For example, In 1981, J. S. Williams \cite{JSW} introduced the prime graph of a finite group and
provided a classification of groups with disconnected prime graphs and some  key properties of the connected components of prime graphs.
In 2006, A. Abdollahi, S. Akbari, and H. R. Maimani \cite{AAM} introduced the non-commuting graph of groups
and established the connections between the graph properties and the group commutativity.
In 2025, D. Towers, I. Guti\'{e}rrez, and I. Fern\'{a}ndez \cite{TGF} introduced the solvableizer and the solvable graph in a Lie algebra
and developed computational frameworks based on GAP and SageMath.
More generally, such as prime power graphs of finite simple groups of Lie type \cite{KWMS}, non-nilpotent graph of groups \cite{AZ} and so on, please refer to \cite{DG,Zr,DN,BC,KVTC,GKLNS,THu,cmmo,BN,ACS,BMLAAA}.

Building on these ideas, we define the solvabilizer $\mathrm{sol}(L)$ and the solvable graph $\mathfrak{S}(L)$ for a
Lie superalgebra $L$ over a field $\mathbb{F}$ with characteristic $p\neq 2$. The vertex set is $L\setminus \operatorname{sol}(L)$, where
\begin{equation*}
\mathrm{sol}(L):= \{x\in L|\langle z,x\rangle\ is\ a\ solvable\ subsuperalgebra\ of\ L,\ for\ all\ z\in L\},
\end{equation*}
and the distinct vertices $x,y\in L\setminus \operatorname{sol}(L)$ are adjacent if the subsuperalgebra $\langle z,x\rangle$ is solvable.
For a Lie algebra, the center is always contained in the solvabilizer,
and every element is contained in its solvabilizer.
However, these properties do not generally hold for a Lie superalgebra.
Consequently, the solvabilizers of Lie superalgebras are more challenging to characterize.
In this paper, we consider Lie superalgebras over the field of characteristic $p\neq 2$.
For the case of $p=2$, the structure of the Lie superalgebras differs significantly. For details, readers are referred to \cite{BGLAEA,DLeites,AKALDLIS}.
The main results of this paper are as follows:

\begin{itemize}
\item The Extension Theorem and the properties for the solvabilizer under direct sums and homomorphisms are presented for a Lie superalgebra.


\item The solvable graph $\mathfrak{S}(L)$ and the solvability measure $\nu(L)$ are introduced for a Lie superalgebra $L$,
which quantify how close $L$ is to being solvable. We further prove that $\mathfrak{S}(L)$ and $\nu(L)$ are isomorphism invariants for a Lie superalgebra.

\item The functor $\mathcal{S}$ associates Lie superalgebras with their solvable substructures and the solvable graph category $\mathcal{G}$ associates Lie superalgebras with their solvable graphs are introduced.


\item Suppose that $L_{1}$ and $L_{2}$ are finite dimensional non-solvable Lie superalgebras over the field with $\text{char}\ \mathbb{F} \geq 3$.
Let $\varphi: L_1 \to L_2$ be a surjective homomorphism such that $\ker\varphi \subseteq \operatorname{sol}(L_1)$. Then
\[
\nu(L_2) \geq \nu(L_1),
\]
with equality if and only if $\varphi$ is an isomorphism of Lie superalgebras.
\end{itemize}

It is important to note that various results in this paper depend on the characteristic of the ground field $\mathbb{F}$.

\section{Basic}\label{section2}\ \ \

We briefly recall various facts associated with Lie superalgebras and Graph theory.
For more details, see \cite{RDi,MAAZAA}. Throughout this paper, $\mathbb{F}$ denotes a field;
$\operatorname{char} \mathbb{F}$ denotes the characteristic of $\mathbb{F}$; $\mathbb{Z}$ denotes the set of integers;
and $\mathbb{N}$ denotes the set of positive integers.
Let $\mathbb{Z}_{2} := \mathbb{Z}/2\mathbb{Z}$ be the ring of integers modulo 2. All Lie superalgebras considered are over a fixed field $\mathbb{F}$.
If $\operatorname{char} \mathbb{F}\neq 2$, a Lie superalgebra is a superalgebra $L = L_{\bar{0}} \oplus L_{\bar{1}}$
with an operation $[\cdot, \cdot]$ satisfying the following axioms:
\begin{enumerate}[label=(\arabic*)]
\item $[x,y]=-(-1)^{|x||y|}[y,x]$, \text{ for } $x,y\in L_{\bar{0}}\cup L_{\bar{1}}$,
\item $[x,[y,z]]=[[x,y],z]+(-1)^{|x||y|}[y,[x,z]]$, \text{ for } $x,y,z\in L_{\bar{0}}\cup L_{\bar{1}}$.
\end{enumerate}
Here $|x|=0$ if $x\in L_{\overline{0}}$ and $|x|=1$ if $x\in L_{\overline{1}}$. If $\operatorname{char} \mathbb{F}=3$,
The Jacobi identity for Lie superalgebras entails, additionally, that
$$[x,[x,x]]=0,\text{ for any }x\in L_{\overline{1}}.$$

A Lie superalgebra is said to be \emph{abelian} if $[x,y]=0$ for all $x,y\in L$.
The \emph{dimension} of Lie superalgebra $L$ is an ordered pair $(d_{0},d_{1})$,
where $d_{0}=\operatorname{dim}_{\mathbb{F}}L_{\overline{0}}$ and $d_{1}=\operatorname{dim}_{\mathbb{F}}L_{\overline{1}}$.
A $\mathbb{Z}_{2}$-graded subspace $I=I_{\overline{0}}\oplus I_{\overline{1}}$ of $L$ is called a \emph{graded ideal}
if $[x,y]\in I_{\overline{l+k}}$ for all $x\in I_{\overline{l}},y\in L_{\overline{k}}$,
where $\overline{l},\overline{k}\in \mathbb{Z}_{2}$ .
The intersections, sums, and Lie brackets of graded ideals are graded ideals.

The \emph{derived series} of $L$ is defined recursively by
\begin{equation*}
L^{(0)}=L,\ \ L^{(k+1)}=[L^{(k)},L^{(k)}],\ k\in \mathbb{Z}.
\end{equation*}

The \emph{descending central series} is given recursively by
\begin{equation*}
L^{1}=L,\ \ L^{k+1}=[L^{k},L],\ k\in \mathbb{Z}.
\end{equation*}

A Lie superalgebra $L$ is called \emph{solvable} if $L^{(r)}=0$ for some $r\in \mathbb{N}$, and \emph{nilpotent} if $L^{k}=0$ for some $k\in \mathbb{N}$.


A \emph{graph} $\Gamma = (V, E)$ consists of a non-empty vertex set $V$ and an edge set $E$,
where $E$ is a collection of unordered pairs of distinct elements from $V$. If $\{u, v\} \in E$,
then $u$ and $v$ are said to be \emph{adjacent}.
A \emph{subgraph} of $\Gamma= (V, E)$ is a graph $\Gamma' = (V', E')$ with $\emptyset \neq V' \subseteq V$ and $E' \subseteq E$.

A \emph{path} in $\Gamma= (V, E)$ is a sequence of distinct vertices $v_{1}, \dots, v_{k}$ such that $\{v_{i}, v_{i+1}\} \in E$ for all $1 \leq i < k$.
The graph $\Gamma= (V, E)$ is \emph{connected} if there exists a path between any two distinct vertices;
otherwise, it is \emph{disconnected}, and its maximal connected subgraph is called \emph{component};
the number of components is denoted by $\kappa(\Gamma)$. Two graphs $\Gamma_{1}= (V_{1}, E_{1})$ and $\Gamma_{2}= (V_{2}, E_{2})$ are said to be
\emph{isomorphic}, denoted $\Gamma_{1}\cong \Gamma_{2}$, if there exists a bijection $f:V_{1}\rightarrow V_{2}$ such that for every pair of distinct vertices
$u,v\in V_{1}$, $\{u,v\}\in E_{1}$ if and only if $\{f(u),f(v)\}\in E_{2}$. Such a bijection $f$ is called an \emph{isomorphism} between $\Gamma_{1}$ and $\Gamma_{2}$.

\section{The solvabilizer of a Lie superalgebra}\label{section5}
In this section, we establish some basic properties of the solvabilizer in a Lie superalgebra.
Throughout, the subalgebra refers to a subsuperalgebra,
and $\langle A\rangle$ denotes the subalgebra of $L$ generated by $A\in L$.

\begin{definition}
Suppose that $L$ is a finite dimensional Lie superalgebra.
\begin{enumerate}[label=(\arabic*)]
\item For non-empty subsets $A,B\subseteq L$, the solvabilizer of $B$ with respect to $A$ is defined as follows:
\begin{equation*}
\mathrm{sol}_{A}(B):= \{x\in A|\langle z,x\rangle\ is\ a\ solvable\ subalgebra\ of\ L,\ for\ all\ z\in B\}.
\end{equation*}

\item The solvabilizer of $L$ is defined by:
\begin{equation*}
\mathrm{sol}(L):= \{x\in L|\langle z,x\rangle\ is\ a\ solvable\ subalgebra\ of\ L,\ for\ all\ z\in L\}.
\end{equation*}
\end{enumerate}
Set $\mathrm{sol}_{\emptyset}(B):=\emptyset$, $\mathrm{sol}_{L}(z):=\mathrm{sol}_{L}(\{z\})$ and
$$\mathrm{sol}_{A}(\emptyset):=\{x\in A|\langle x\rangle\ is\ a\ solvable\ subalgebra\ of\ L\}.$$
\end{definition}

\noindent \textbf{Remark.}
 For a Lie superalgebra $L$ and $A\subseteq L$, $\mathrm{sol}_{A}(\emptyset)$ is not necessarily equal to $A$.
Since for an element $x\in A$, $\langle x\rangle$ is not necessarily a solvable subalgebra. However, $\mathrm{sol}_{A}(\emptyset)=A$ always holds in a Lie algebra.


\begin{example}
Let $L=L_{\overline{0}}\oplus L_{\overline{1}}$ be a Lie superalgebra with $\operatorname{char} \mathbb{F}=0$, where $L_{\overline{0}}=\operatorname{span}\{h\}$ and
$L_{\overline{1}}=\operatorname{span}\{x\}$. The Lie bracket is defined by
$$[h,x]=x,\ [x,x]=0.$$
It follows that $\operatorname{sol}_{L}(h) = L$ and $\operatorname{sol}(L) = L$.
\end{example}

Recall that the nilpotentizer for a Lie superalgebra \cite{AKALDLIS,HAS}.
Suppose $L$ is a finite dimensional Lie superalgebra in characteristic $p\neq 2$.
\begin{enumerate}[label=(\arabic*),font=\textnormal]
\item The nilpotentizer of $z$ in $L$ is defined by:
\begin{equation*}
\operatorname{nil}_{L}(z):= \{x\in L|\langle z,x\rangle\ is\ a\ nilpotent\ subalgebra\ of\ L\}.
\end{equation*}
\item The nilpotentizer of $L$ is defined by:
\begin{equation*}
\operatorname{nil}(L):= \{x\in L|\langle z,x\rangle\ is\ a\ nilpotent\ subsuperalgebra\ of\ L, \ for\ all\ z\in L\}.
\end{equation*}
\end{enumerate}

\begin{proposition}\label{}
Suppose that $L$ is a Lie superalgebra and $A,B,C$ are non-empty subsets of $L$.
Then the following properties hold:
\begin{enumerate}[label=(\arabic*)]
\item If $A\subseteq B$, then $\mathrm{sol}_{A}(C)\subseteq \mathrm{sol}_{B}(C)$ and $\mathrm{sol}_{C}(B)\subseteq \mathrm{sol}_{C}(A)$.

\item If $A\subseteq B$, then $\mathrm{sol}_{A}(C)=A\cap \mathrm{sol}_{B}(C)$.

\item $\mathrm{sol}_{C}(A\cup B)= \mathrm{sol}_{C}(A)\cap\mathrm{sol}_{C}(B)$.

\item $\mathrm{sol}_{A}(B)=\cap_{z\in B} \mathrm{sol}_{A}(z)$. In particular, $\mathrm{sol}(L)=\cap_{z\in L} \mathrm{sol}_{L}(z)$.

\item $\mathrm{nil}_{L}(z) \subseteq\mathrm \mathrm{sol}_{L}(z)$.

\item $\mathrm{nil}(L)\subseteq\mathrm{sol}(L)$.

\item If $L$ is solvable, then $\mathrm{sol}(L)=L$.

\item If $I$ is an ideal of $L$, then $\mathrm{sol}(L)\cap I\subseteq \mathrm{sol}(I)$.

\item If $J$ is a graded ideal of $L$, and $z\in L$, then $\frac{\mathrm{sol}_{L}(z)+J}{J}\subseteq \mathrm{sol}_{L/J}(z+J)$.

\item $\operatorname{sol}_{L}(z)$ is the union of all maximal solvable subalgebras of $L$ that contain $z$.
\end{enumerate}
\end{proposition}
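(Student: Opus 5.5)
The plan is to unwind the definitions item by item, using two standard facts about Lie superalgebras: subalgebras of solvable Lie superalgebras are solvable, and homomorphic images of solvable Lie superalgebras are solvable. Both follow from $\varphi(L^{(k)})=\varphi(L)^{(k)}$ and $H^{(k)}\subseteq L^{(k)}$ for a subalgebra $H\subseteq L$, by induction on $k$. We also use that every nilpotent Lie superalgebra is solvable, since $L^{(k)}\subseteq L^{k+1}$, again by induction.

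Items (1)--(4) are purely set-theoretic. For (1), if $x\in \mathrm{sol}_A(C)$ then $x\in A\subseteq B$ and the condition on $C$ is unchanged. If $x\in\mathrm{sol}_C(B)$, then the condition holds for all $z\in B$, hence for all $z\in A$. For (2), membership in either side means the same condition on $C$ together with $x\in A$. For (3) and (4), the quantifier ``for all $z\in A\cup B$'' (respectively ``for all $z\in B$'') splits as a conjunction (respectively an intersection over $z$). Taking $A=B=L$ in (4) gives $\mathrm{sol}(L)=\bigcap_{z\in L}\mathrm{sol}_L(z)$.

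Items (5)--(8) use the facts above. For (5), if $\langle z,x\rangle$ is nilpotent then it is solvable. Item (6) follows from (5) together with (4) and the analogous intersection description of $\mathrm{nil}(L)$. For (7), every $\langle z,x\rangle$ is a subalgebra of the solvable $L$, hence solvable. For (8), let $x\in\mathrm{sol}(L)\cap I$ and $z\in I$. The subalgebra of $I$ generated by $z,x$ coincides with the subalgebra $\langle z,x\rangle$ of $L$, which is solvable because $x\in\mathrm{sol}(L)$. For (9), take $x\in\mathrm{sol}_L(z)$ and let $\pi:L\to L/J$ be the graded projection, which is a homomorphism of Lie superalgebras. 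The image $\pi(\langle z,x\rangle)$ is a subalgebra containing $z+J$ and $x+J$, so it contains $\langle z+J,x+J\rangle$. Since $\pi(\langle z,x\rangle)$ is a homomorphic image of a solvable algebra, it is solvable, and hence so is its subalgebra $\langle z+J,x+J\rangle$. As every element of $(\mathrm{sol}_L(z)+J)/J$ has the form $x+J$ with $x\in\mathrm{sol}_L(z)$, the inclusion follows.

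Item (10) is the main point to handle carefully, and I would prove it by a double inclusion.

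\emph{Inclusion $\subseteq$.} Let $x\in\mathrm{sol}_L(z)$. Then $\langle z,x\rangle$ is a solvable subalgebra containing both $z$ and $x$. Since $L$ is finite dimensional, $\langle z,x\rangle$ lies in a solvable subalgebra of maximal dimension among the solvable subalgebras containing it, and such a subalgebra is a maximal solvable subalgebra. So $x$ lies in a maximal solvable subalgebra containing $z$.

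\emph{Inclusion $\supseteq$.} If $x\in S$ with $S$ a maximal solvable subalgebra and $z\in S$, then $\langle z,x\rangle\subseteq S$ is solvable, so $x\in\mathrm{sol}_L(z)$.

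The one subtlety is the case where no solvable subalgebra contains $z$ at all, i.e.\ $\langle z\rangle$ is not solvable; the Remark notes this can happen for odd $z$. In that case $\mathrm{sol}_L(z)=\emptyset$, and the union over an empty family is also empty, so the equality still holds. I would state this case explicitly.
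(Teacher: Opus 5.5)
Your proposal is correct and follows the paper's proof essentially step for step: definition-unwinding for (1)--(7), the same generated-subalgebra argument for (8), homomorphic images for (9), and the double inclusion through a maximal solvable subalgebra for (10), with your explicit handling of the case $\mathrm{sol}_L(z)=\emptyset$ in (10) a small improvement over the paper. One harmless slip in that aside: for homogeneous odd $z$ one has $[z,[z,z]]=0$, so $\langle z\rangle=\mathrm{span}\{z,[z,z]\}$ is always solvable, and a non-solvable $\langle z\rangle$ can only arise from an inhomogeneous $z$, not an odd one.
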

\begin{proof}
Results (1)-(7) are straightforward.

(8) Suppose that $I$ is an ideal of $L$. For any $x\in \mathrm{sol}(L)\cap I$              
and any $z\in I$, the subalgebra $\langle z,x\rangle$ is solvable. It follows that $x\in \mathrm{sol}(I)$.
Therefore, $\mathrm{sol}(L)\cap I\subseteq \mathrm{sol}(I)$.

(9) Suppose that $J$ is an ideal of $L$. Let $x \in \operatorname{sol}_{L}(z)$. Then the subalgebra $\langle z, x \rangle$ is solvable.
Furthermore, we have $$\langle z + J, x + J \rangle = (\langle z, x \rangle + J)/J,$$
which is a homomorphic image of $\langle z, x \rangle$.
Since solvability is preserved under homomorphisms, it follows that $x + J \in \operatorname{sol}_{L/J}(z + J)$. This proves the inclusion
$$\frac{\mathrm{sol}_{L}(z)+J}{J}\subseteq \mathrm{sol}_{L/J}(z+J).$$

(10) Suppose that $x \in \operatorname{sol}_{L}(z)$.
Then the subalgebra $\langle z, x \rangle$ is solvable.
Consequently, it is contained in some maximal solvable subalgebra $M$ of $L$, and hence $x \in M$.
Conversely, if $x \in M$ for some maximal solvable subalgebra $M$ containing $z$, then $\langle z, x \rangle \subseteq M$ is solvable,
whence $x \in \operatorname{sol}_{L}(z)$.
\end{proof}







Next, we present the Extension Theorem for solvabilizer.
For a Lie superalgebra $L$ and $x,y\in L$, $\langle x, y \rangle_{L}$ denotes the subalgebra of $L$ generated by $x$ and $y$.

\begin{theorem}
Let $L$ be a finite dimensional Lie superalgebra and $\mathfrak{g}\subseteq L$ a subalgebra.
Then for any $x\in \mathfrak{g}$, we have
$$\operatorname{sol}_{L}(x)\cap \mathfrak{g}=\operatorname{sol}_{\mathfrak{g}}(x).$$
\end{theorem}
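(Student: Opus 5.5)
The plan is to unwind the definitions and prove the two inclusions separately. The one structural fact I need is that the subalgebra generated by a set does not depend on the ambient algebra. Concretely, for $x,y\in\mathfrak{g}$ I will show
$$\langle x,y\rangle_{\mathfrak{g}}=\langle x,y\rangle_{L}.$$
The subalgebra $\langle x,y\rangle_{L}$ is the smallest subsuperalgebra of $L$ containing $x$ and $y$. Since $\mathfrak{g}$ is a subsuperalgebra of $L$ containing $x$ and $y$, we get $\langle x,y\rangle_{L}\subseteq\mathfrak{g}$. Moreover $\langle x,y\rangle_{L}$ is itself a subsuperalgebra of $\mathfrak{g}$ containing $x,y$, so it contains $\langle x,y\rangle_{\mathfrak{g}}$. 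Conversely, $\langle x,y\rangle_{\mathfrak{g}}$ is a subsuperalgebra of $L$ containing $x,y$, so it contains $\langle x,y\rangle_{L}$.

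One point needs care: "subalgebra" here means graded subsuperalgebra, and generation by inhomogeneous elements should be read as the smallest graded subalgebra containing them. This does not affect the argument, because the smallest-object characterization holds equally in $L$ and in $\mathfrak{g}$: a graded subspace of $\mathfrak{g}$ is graded in $L$ because $\mathfrak{g}_{\bar i}=\mathfrak{g}\cap L_{\bar i}$. Equivalently, I could describe both generated subalgebras as the span of all iterated brackets of the homogeneous components of $x$ and $y$. These components lie in $\mathfrak{g}$ since $\mathfrak{g}$ is graded, and all the brackets are computed with the same bracket of $L$.

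Next I observe that solvability is intrinsic. The derived series of a subalgebra $S$ is built only from the bracket on $S$, so whether $S$ is solvable does not depend on whether we regard $S$ inside $\mathfrak{g}$ or inside $L$.

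With these two facts, the inclusions follow directly. Let $y\in\operatorname{sol}_{L}(x)\cap\mathfrak{g}$. Then $\langle x,y\rangle_{L}$ is solvable, and it equals $\langle x,y\rangle_{\mathfrak{g}}$, so $y\in\operatorname{sol}_{\mathfrak{g}}(x)$. Conversely, let $y\in\operatorname{sol}_{\mathfrak{g}}(x)$. Then $y\in\mathfrak{g}$ and $\langle x,y\rangle_{\mathfrak{g}}=\langle x,y\rangle_{L}$ is solvable, so $y\in\operatorname{sol}_{L}(x)$. This is consistent with Proposition (2) read with $A=\mathfrak{g}$, $B=L$, $C=\{x\}$, which gives $\operatorname{sol}_{\mathfrak{g}}(\{x\})=\mathfrak{g}\cap\operatorname{sol}_{L}(x)$ when the defining condition is taken in $L$; the content of the theorem is that taking the condition in $\mathfrak{g}$ instead changes nothing.

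The only real obstacle is the identification $\langle x,y\rangle_{\mathfrak{g}}=\langle x,y\rangle_{L}$, and in particular making sure the graded-generation convention is handled consistently. Everything else is a tautology.
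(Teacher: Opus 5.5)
Your proof is correct and follows the same route as the paper: both prove the two inclusions via the identity $\langle x,y\rangle_{\mathfrak{g}}=\langle x,y\rangle_{L}$ for $x,y\in\mathfrak{g}$, together with the fact that solvability is intrinsic to the subalgebra. The paper simply asserts that identity, and uses it only implicitly in the reverse inclusion. You justify it, including the graded-generation convention, which is a welcome addition.
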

\begin{proof}
On the one hand, let $y\in \operatorname{sol}_{L}(x)\cap \mathfrak{g}$. Then $y\in L$ and the subalgebra $\langle x,y\rangle_{L}$ is solvable.
Since $x,y \in \mathfrak{g}$, we have $\langle x,y\rangle_{\mathfrak{g}}=\langle x,y\rangle_{L}$, which is therefore solvable.
Hence, $y\in \operatorname{sol}_{\mathfrak{g}}(x)$.
On the other hand, let $y \in \operatorname{sol}_{\mathfrak{g}}(x)$.
Then the subalgebra $\langle x, y \rangle_{\mathfrak{g}}$ is solvable.
As $\mathfrak{g} \subseteq L$, it follows that $\langle x, y \rangle_{\mathfrak{g}}$ is a solvable subalgebra of $L$,
and so $y \in \operatorname{sol}_{L}(x)$. Since $y$ also lies in $\mathfrak{g}$, we conclude that $y \in \operatorname{sol}_L(x) \cap \mathfrak{g}$.
\end{proof}

Next, we recall the definition of the direct sum for a Lie superalgebra.
Let $L = L_{\overline{0}} \oplus L_{\overline{1}}$ and $M = M_{\overline{0}} \oplus M_{\overline{1}}$ be Lie superalgebras over a field $\mathbb{F}$.
The direct sum of $L$ and $M$, denoted by $L \oplus M$, is the Lie superalgebra defined as follows:
\begin{enumerate}[label=(\arabic*)]
    \item  As a vector space, $L \oplus M$ is the direct sum of $L$ and $M$;
    \item $(L \oplus M)_{\alpha} = L_{\alpha} \oplus M_{\alpha}$ for $\alpha \in \mathbb{Z}_{2}$;
    \item For arbitrary $x_{1}, x_{2} \in L$ and $y_{1}, y_{2} \in M$,
    \[
    [x_1 + y_1, x_2 + y_2] = [x_1, x_2]_{L} + [y_1, y_2]_{M},
    \]
    where $[-, -]_{L}$ and $[-, -]_{M}$ denote the super brackets in $L$ and $ M$ respectively, with $[L, M] = \{0\}$.
\end{enumerate}

\begin{theorem}\label{sol direct sum}
Suppose that $L_{1}$ and $L_{2}$ are finite dimensional Lie superalgebras
and $L = L_{1} \oplus L_{2}$.
Then the following properties hold:
\begin{enumerate}[label=(\arabic*)]
\item The solvabilizer of $L$ satisfies:
$$\operatorname{sol}(L) = \operatorname{sol}(L_{1}) \oplus \operatorname{sol}(L_{2}).$$
\item For any element $x = (x_1, x_2) \in L$, the solvabilizer of $x$ satisfies:
$$\operatorname{sol}_L(x) = \operatorname{sol}_{L_{1}}(x_1) \oplus \operatorname{sol}_{L_{2}}(x_2).$$
\end{enumerate}
\end{theorem}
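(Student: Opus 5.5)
The plan is to reduce everything to one structural fact about two‑generated subalgebras of a direct sum, and then check both inclusions in (2), deducing (1) from (2) together with part (4) of the preceding Proposition.

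\textbf{Key lemma.} For $x=(x_1,x_2)$ and $y=(y_1,y_2)$ in $L=L_1\oplus L_2$, set $H=\langle x,y\rangle_L$ and $H_i=\langle x_i,y_i\rangle_{L_i}$, and let $\pi_i:L\to L_i$ be the projections. Since $\pi_i$ is a homomorphism of Lie superalgebras (the direct sum is graded componentwise), $\pi_i(H)$ is the subalgebra generated by $\pi_i(x),\pi_i(y)$, so $\pi_i(H)=H_i$. Hence $H\subseteq H_1\oplus H_2$, and $H$ is a subdirect product of $H_1$ and $H_2$. I will show that $H$ is solvable if and only if both $H_1$ and $H_2$ are solvable.
\begin{itemize}
\item If $H$ is solvable, then its images $H_i=\pi_i(H)$ are solvable, since solvability passes to homomorphic images.
\item Conversely, if $H_1$ and $H_2$ are solvable, then $H_1\oplus H_2$ is solvable because $(H_1\oplus H_2)^{(k)}=H_1^{(k)}\oplus H_2^{(k)}$ (the bracket is componentwise and $[L_1,L_2]=0$). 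Then $H$ is solvable as a subalgebra of it, using $H^{(k)}\subseteq (H_1\oplus H_2)^{(k)}$.
\end{itemize}

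\textbf{Part (2).} By the lemma,
$$y\in\operatorname{sol}_L(x)\iff \text{both }\langle x_i,y_i\rangle_{L_i}\text{ are solvable}\iff y_1\in\operatorname{sol}_{L_1}(x_1)\text{ and }y_2\in\operatorname{sol}_{L_2}(x_2).$$
This says exactly that $\operatorname{sol}_L(x)=\operatorname{sol}_{L_1}(x_1)\oplus\operatorname{sol}_{L_2}(x_2)$. Here $\oplus$ is read as the set of sums $y_1+y_2$, since solvabilizers need not be subspaces.

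\textbf{Part (1).} By Proposition (4), $\operatorname{sol}(L)=\bigcap_{x\in L}\operatorname{sol}_L(x)$. Applying (2),
$$\operatorname{sol}(L)=\bigcap_{(x_1,x_2)}\Bigl(\operatorname{sol}_{L_1}(x_1)\times\operatorname{sol}_{L_2}(x_2)\Bigr)=\Bigl(\bigcap_{x_1}\operatorname{sol}_{L_1}(x_1)\Bigr)\times\Bigl(\bigcap_{x_2}\operatorname{sol}_{L_2}(x_2)\Bigr).$$
The second equality holds because $x_1$ and $x_2$ range independently. The right-hand side is $\operatorname{sol}(L_1)\oplus\operatorname{sol}(L_2)$.

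\textbf{Main obstacle.} The only point needing care is the identification $\pi_i(\langle x,y\rangle)=\langle x_i,y_i\rangle$ when $x,y$ are not homogeneous. The subalgebra generated by a set is the smallest graded subalgebra containing it, so it contains the homogeneous components of the generators. The map $\pi_i$ preserves gradings and commutes with taking homogeneous components, so the identification still holds. I will verify this explicitly. All remaining steps are routine.
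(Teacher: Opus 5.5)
Your proof is correct. It follows the paper's overall plan: both parts reduce to the criterion that $\langle x,y\rangle$ is solvable if and only if $\langle x_1,y_1\rangle_{L_1}$ and $\langle x_2,y_2\rangle_{L_2}$ are solvable. The difference is in how that criterion is justified, and your justification is the sound one.

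The paper asserts the equality $\langle x,y\rangle=\langle x_1,y_1\rangle_{L_1}\oplus\langle x_2,y_2\rangle_{L_2}$ and then compares derived series componentwise. That equality is false in general. Take $L_1=L_2=M$, $x=(a,a)$ and $y=(b,b)$. The generated subalgebra is the diagonal copy $\{(m,m)\mid m\in\langle a,b\rangle_M\}$. This is a proper subset of $\langle a,b\rangle_M\oplus\langle a,b\rangle_M$ whenever $\langle a,b\rangle_M\neq 0$.

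Your version uses only that $H=\langle x,y\rangle$ is a subdirect product of $H_1$ and $H_2$, i.e.\ $\pi_i(H)=H_i$ and $H\subseteq H_1\oplus H_2$. Combined with closure of solvability under homomorphic images and under subalgebras, this is exactly what the equivalence needs.

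For the point you flagged about nonhomogeneous generators, the identification $\pi_i(H)=H_i$ has a two-line check. First, $\pi_i(H)$ is a graded subalgebra of $L_i$ containing $x_i,y_i$, which gives $\supseteq$. Second, $\pi_i^{-1}(H_i)$ is a graded subalgebra of $L$ containing $x,y$, which gives $\subseteq$.

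Your derivation of (1) uses $\operatorname{sol}(L)=\bigcap_{z\in L}\operatorname{sol}_L(z)$, part (2), and the fact that the two coordinates range independently. This is equivalent to the paper's direct chain of equivalences. Your reading of $\oplus$ as the set of sums $y_1+y_2$, rather than a direct sum of subspaces, is also appropriate here.
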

\begin{proof}
For any $x = (x_1, x_2)$, $y = (y_1, y_2)\in L$.
By the definition of a direct sum, we have
$$\langle x, y \rangle = \langle x_1, y_1 \rangle_{L_1} \oplus \langle x_2, y_2 \rangle_{L_2}.$$
A straightforward calculation shows that:

$$\langle x, y \rangle^{(k)} = \langle x_1, y_1 \rangle_{L_1}^{(k)} \oplus \langle x_2, y_2 \rangle_{L_2}^{(k)} \quad \text{for all } k \geq 2.$$
Consequently, $\langle x, y \rangle$ is solvable if and only if both $\langle x_1, y_1 \rangle_{L_1}$ and $\langle x_2, y_2 \rangle_{L_2}$ are solvable.
Since, we have the following chain of equivalences:
$$
\begin{aligned}
x \in \operatorname{sol}(L_1) \oplus \operatorname{sol}(L_2) &\Leftrightarrow  \langle x_{1}, y_{1} \rangle_{L_1} \text{ and } \langle x_{2}, y_{2} \rangle_{L_2} \text{ are solvable, for any } y = (y_1, y_2)\in L\\
&\Leftrightarrow  \langle x, y \rangle_{L_1 \oplus L_2} \text{ is solvable, for any } y = (y_1, y_2)\in L\\
&\Leftrightarrow  x \in \operatorname{sol}(L_1 \oplus L_2).
\end{aligned}
$$
This establishes that $\operatorname{sol}(L_1 \oplus L_2) = \operatorname{sol}(L_1) \oplus \operatorname{sol}(L_2)$.

(2) For any $x = (x_1, x_2)$ and $y = (y_1, y_2) \in L$, we have the following chain of equivalences:
$$
\begin{aligned}
y \in \operatorname{sol}_L(x) &\Leftrightarrow  \langle x, y \rangle \text{ is solvable} \\
&\Leftrightarrow  \langle x_1, y_1 \rangle_{L_1} \text{ and } \langle x_2, y_2 \rangle_{L_2} \text{ are solvable} \\
&\Leftrightarrow  y_1 \in \operatorname{sol}_{L_1}(x_1) \text{ and } y_2 \in \operatorname{sol}_{L_2}(x_2).
\end{aligned}
$$
Therefore,
$$
\operatorname{sol}_L(x) = \{ (y_1, y_2) \mid y_1 \in \operatorname{sol}_{L_1}(x_1), y_2 \in \operatorname{sol}_{L_2}(x_2) \} = \operatorname{sol}_{L_1}(x_1) \oplus \operatorname{sol}_{L_2}(x_2).
$$
\end{proof}

In general, if a Lie superalgebra $L$ decomposes as a direct sum of ideals $L=L_{1}\oplus L_{2}\oplus\cdots\oplus L_{n}$,
then for any element $x = (x_1,\cdots,x_n) \in L$,  the following identities hold:
\begin{equation*}
\mathrm{sol}(L)=\mathrm{sol}(L_{1})\oplus\cdots\oplus \mathrm{sol}(L_{n}),
\end{equation*}
and
\begin{equation*}
\operatorname{sol}_{L}(x)=\operatorname{sol}_{L_{1}}(x_{1}) \oplus\cdots\oplus \operatorname{sol}_{L_{n}}(x_{n}).
\end{equation*}

\begin{theorem}\label{sol12345}
Suppose that $L_{1},L_{2}$ are two finite dimensional Lie superalgebras, and let
$\varphi: L_{1}\longrightarrow L_{2}$ be a homomorphism.
Then the following conclusions hold:
\begin{enumerate}[label=(\arabic*)]
\item $\varphi(\operatorname{sol}(L_{1}))\subseteq \operatorname{sol}(\varphi(L_{1})).$

\item If $\operatorname{ker} \varphi\subseteq \operatorname{sol}(L_{1})$, then $\varphi(\operatorname{sol}(L_{1}))=\operatorname{sol}(\varphi(L_{1}))$.
\end{enumerate}

\end{theorem}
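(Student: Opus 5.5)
For part (1), I will fix $x\in\operatorname{sol}(L_1)$ and an arbitrary element $w\in\varphi(L_1)$, and choose a preimage $z\in L_1$ with $\varphi(z)=w$. Since $\varphi$ is a homomorphism of Lie superalgebras, the image of the subalgebra generated by $z$ and $x$ is the subalgebra generated by their images:
$$\varphi(\langle z,x\rangle)=\langle \varphi(z),\varphi(x)\rangle=\langle w,\varphi(x)\rangle.$$
To justify this, I would note that $\varphi(\langle z,x\rangle)$ is a subalgebra containing $w$ and $\varphi(x)$. Moreover, the preimage of $\langle w,\varphi(x)\rangle$ is a subalgebra containing $z$ and $x$, which gives the reverse inclusion. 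Because $\langle z,x\rangle$ is solvable, and $\varphi$ maps derived series onto derived series ($\varphi(H^{(k)})=\varphi(H)^{(k)}$), the image is solvable. Hence $\varphi(x)\in\operatorname{sol}(\varphi(L_1))$.

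For part (2), part (1) already gives one inclusion, so it remains to show $\operatorname{sol}(\varphi(L_1))\subseteq\varphi(\operatorname{sol}(L_1))$. Take $u\in\operatorname{sol}(\varphi(L_1))$ and any preimage $x\in L_1$ with $\varphi(x)=u$. I will show that $x$ itself lies in $\operatorname{sol}(L_1)$. For arbitrary $z\in L_1$, set $H=\langle z,x\rangle$. Then $\varphi(H)=\langle\varphi(z),u\rangle$ is solvable, so $\varphi(H^{(r)})=\varphi(H)^{(r)}=0$ for some $r$. This gives $H^{(r)}\subseteq \ker\varphi\cap H$, so it suffices to show that $H\cap\ker\varphi$ is solvable. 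Solvability is closed under extensions, since if $H^{(r)}$ is solvable then $H^{(r+s)}=(H^{(r)})^{(s)}$.

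The main obstacle is showing that $K=\ker\varphi$ (or $H\cap K$) is solvable from the hypothesis $K\subseteq\operatorname{sol}(L_1)$. This hypothesis only says that each $\langle k,y\rangle$ is solvable, which is a pairwise, element-wise condition. It does not obviously imply that the ideal $K$ is solvable, and $\operatorname{sol}(L_1)$ need not even be a subspace. My first attempt will be to use finite dimensionality together with an Engel/Zorn-type argument. Pick a maximal solvable subalgebra $M$ of $K$. If $M\ne K$, choose $k\in K\setminus M$ and try to show that $M+\mathbb{F}k$ (or the subalgebra it generates) is solvable, using that $\langle k,m\rangle$ is solvable for every $m$. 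That pairwise condition alone seems insufficient in general. So, as a fallback, I would look for a way to avoid needing solvability of $K$ at all.

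The fallback is to choose the preimage cleverly. Since $H^{(r)}\subseteq K$ and every element of $K$ lies in $\operatorname{sol}(L_1)$, each $\langle k,y\rangle$ with $k\in H^{(r)}$ is solvable. I would try to combine this with an induction on $\dim H$, applying Proposition (9) to the graded ideal $K$, to conclude that $H$ is solvable. If even this fails, I would state (2) under the additional, natural assumption that $\ker\varphi$ is a solvable ideal, for instance when $\ker\varphi\subseteq\operatorname{nil}(L_1)$ in finite dimension. In that case the extension argument above finishes the proof immediately: $x\in\operatorname{sol}(L_1)$, and therefore $u=\varphi(x)\in\varphi(\operatorname{sol}(L_1))$.
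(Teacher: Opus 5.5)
Your part (1) is the paper's argument. Your reduction in part (2) is also exactly the paper's. With $H=\langle z,x\rangle$ and $K=\ker\varphi$, the quotient $H/(H\cap K)\cong\varphi(H)$ is solvable, so everything hinges on $H\cap K$ being solvable.

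\textbf{The gap is that you never prove this.}
\begin{itemize}
\item The maximal-solvable-subalgebra attempt is abandoned.
\item The induction via item (9) of the Proposition cannot work. Item (9) only transports solvability from $L$ down to $L/J$ and never lifts it back, and lifting is the direction you need.
\item The final fallback replaces the hypothesis $\ker\varphi\subseteq\operatorname{sol}(L_1)$ by solvability of $\ker\varphi$, which is a different, weaker theorem. Your remark that $\ker\varphi\subseteq\operatorname{nil}(L_1)$ would suffice is correct, but it needs Engel's theorem and is still not the statement.
\end{itemize}
So (2), as stated, is not established by your proposal.

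You have, however, located precisely the step the paper passes over. Its proof simply asserts that $K\subseteq\ker\varphi\subseteq\operatorname{sol}(L_1)$ makes the ideal $K$ solvable. As you say, this does not follow formally from the element-wise definition.

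\textbf{The missing idea} is to apply the hypothesis to pairs lying inside $K$, not to pairs $(k,y)$ with $y$ arbitrary. For $a,b\in K$ we have $a\in\operatorname{sol}(L_1)$, so $\langle a,b\rangle$ is solvable. Thus every two-generated subalgebra of $K$ is solvable, and what is then needed is a local-to-global solvability criterion.

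In characteristic $0$ this criterion is available:
\begin{itemize}
\item If the Lie algebra $K_{\bar 0}$ were not solvable, its Levi factor would be a nonzero semisimple Lie algebra. Such an algebra is generated by two elements (Kuranishi, extended to arbitrary fields of characteristic $0$ by a Zariski-density argument), which contradicts two-local solvability.
\item Hence $K_{\bar 0}$ is solvable, and $K$ is solvable by Kac's criterion that a Lie superalgebra is solvable if and only if its even part is.
\end{itemize}
With this, your extension argument finishes (2).

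In characteristic $p\geq 3$, which the paper allows, neither Levi's theorem nor Lie's theorem is available. There this step requires a genuinely different argument, and neither your proposal nor the paper's one-line assertion supplies it.
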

\begin{proof}
(1) Let $x \in \operatorname{sol}(L_{1})$. Then, for any $z \in L_1$, the subalgebra $\langle x, z \rangle$ is solvable.
Since $\varphi$ is a homomorphism, for any $l \in \varphi(L_{1})$, there exists $y \in L_{1}$ such that $l = \varphi(y)$.
The image $\langle \varphi(x), \varphi(y) \rangle = \varphi(\langle x, y \rangle)$ is a homomorphic image of a solvable algebra and is therefore solvable.
This implies that $\varphi(x) \in \operatorname{sol}(\varphi(L_{1}))$, and hence $\varphi(\operatorname{sol}(L_{1})) \subseteq \operatorname{sol}(\varphi(L_{1}))$.

(2) Let $y\in \operatorname{sol}(\varphi(L_{1}))$.
There exists $x\in L_{1}$ such that $\varphi(x)=y$. To prove the reverse inclusion, we must show that $x\in \operatorname{sol}(L_{1})$.
For any $z\in L_{1}$, we have
$\langle y, \varphi(z)\rangle=\langle \varphi(x), \varphi(z)\rangle=\varphi(\langle x,z\rangle)$ is solvable.
Let $K=\operatorname{ker}\ \varphi\cap \langle x,z\rangle$. Then we have the isomorphism
$\langle x,z\rangle/K\cong \varphi(\langle x,z\rangle)$,  which implies that $\langle x,z\rangle/K$ is solvable.
Furthermore, because of $K\subseteq \operatorname{ker}\ \varphi\subseteq \operatorname{sol}(L_{1})$, the ideal $K$ is solvable and thus
$\langle x,z\rangle$ is solvable.
Therefore, 
$\operatorname{sol}(\varphi(L_{1}))\subseteq \varphi(\operatorname{sol}(L_{1}))$.
In conclusion, $\varphi(\operatorname{sol}(L_{1}))=\operatorname{sol}(\varphi(L_{1}))$.
\end{proof}

\begin{corollary}\label{solco}
Suppose that $L=L_{1}\oplus\cdots\oplus L_{n}$ and $M=M_{1}\oplus\cdots\oplus M_{n}$ are two direct sums of finite dimensional Lie superalgebras,
and let
$\varphi=\varphi_{1}+ \cdots+ \varphi_{n}: L\longrightarrow M$ be a homomorphism,
where $\varphi_{i}: L_{i}\longrightarrow M_{i}(1\leq i \leq n)$ is a homomorphism and $\varphi_{i}(L_{j})=0$ for $i \neq j$.
If $ker\ \varphi\subseteq \operatorname{nil}(L)$,
then $$\operatorname{sol}(\varphi(L))=\varphi_{1}(\operatorname{sol}(L_{1}))\oplus\cdots \oplus \varphi_{n}(\operatorname{sol}(L_{n})).$$
\end{corollary}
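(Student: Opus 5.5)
The plan is to combine Theorem~\ref{sol12345}(2) with the direct sum decomposition of Theorem~\ref{sol direct sum}.

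\textbf{Step 1: check the hypothesis of Theorem~\ref{sol12345}(2).} Since $\ker\varphi\subseteq\operatorname{nil}(L)$, and $\operatorname{nil}(L)\subseteq\operatorname{sol}(L)$ by part (6) of the Proposition, we get $\ker\varphi\subseteq\operatorname{sol}(L)$. Theorem~\ref{sol12345}(2) then gives
\[
\operatorname{sol}(\varphi(L))=\varphi(\operatorname{sol}(L)).
\]

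\textbf{Step 2: decompose $\operatorname{sol}(L)$.} By the $n$-fold version of Theorem~\ref{sol direct sum}(1), $\operatorname{sol}(L)=\operatorname{sol}(L_1)\oplus\cdots\oplus\operatorname{sol}(L_n)$. So every $x\in\operatorname{sol}(L)$ has the form $x=x_1+\cdots+x_n$ with $x_i\in\operatorname{sol}(L_i)$. Since $\varphi_i(L_j)=0$ for $i\neq j$, we have $\varphi(x)=\sum_i\varphi_i(x_i)$. Hence
\[
\varphi(\operatorname{sol}(L))=\varphi_1(\operatorname{sol}(L_1))+\cdots+\varphi_n(\operatorname{sol}(L_n)).
\]

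\textbf{Step 3: show the sum is direct.} This is the only point that needs care, but it is immediate: $\varphi_i(\operatorname{sol}(L_i))\subseteq M_i$, and the $M_i$ form a direct sum in $M$. So the sum of the images is direct inside $M$, and in particular inside $\varphi(L)=\varphi_1(L_1)\oplus\cdots\oplus\varphi_n(L_n)$. Combining with Step 1 gives the claimed identity.

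\textbf{Anticipated obstacle.} I expect none beyond interpretation: $\varphi(\operatorname{sol}(L))$ is a priori only a subset, not a subspace. So $\oplus$ must be read as $\{\sum y_i : y_i\in\varphi_i(\operatorname{sol}(L_i))\}$ with unique decomposition, which is exactly what Step 3 provides.

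Alternatively, one could apply Theorem~\ref{sol direct sum}(1) to $\varphi(L)=\bigoplus\varphi_i(L_i)$ and then Theorem~\ref{sol12345}(2) componentwise. That route requires $\ker\varphi_i\subseteq\operatorname{sol}(L_i)$. This holds because $\ker\varphi=\bigoplus\ker\varphi_i$ and $\operatorname{sol}(L)\cap L_i=\operatorname{sol}(L_i)$, the latter by Theorem~\ref{sol direct sum}. Both routes give the same result, and I would present the first as the cleaner one.
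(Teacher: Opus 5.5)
Your proposal is correct and follows the same route as the paper. You reduce to Theorem~\ref{sol12345}(2) via $\operatorname{nil}(L)\subseteq\operatorname{sol}(L)$, split $\operatorname{sol}(L)$ using Theorem~\ref{sol direct sum}, use $\varphi_i(L_j)=0$ for $i\neq j$, and check that the sum is direct because $\varphi_i(\operatorname{sol}(L_i))\subseteq M_i$, which is the detail the paper's one-line proof leaves implicit.
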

\begin{proof}
By Theorems \ref{sol direct sum} and \ref{sol12345},
we have $$\operatorname{sol}(\varphi(L))=\varphi(\operatorname{sol}(L))=\varphi(\mathrm{sol}(L_{1})\oplus\cdots\oplus \mathrm{sol}(L_{n}))=
\varphi_{1}(\operatorname{sol}(L_{1}))\oplus\cdots\oplus \varphi_{n}(\operatorname{sol}(L_{n}).$$
\end{proof}

\begin{proposition}
Let $\mathcal{L}$ denote the category defined as follows:
\begin{itemize}
    \item \textbf{Objects}: Finite dimensional Lie superalgebras;
    \item \textbf{Morphisms}: Surjective homomorphisms of Lie superalgebras.
\end{itemize}

Define a map $\mathcal{S}:\mathcal{L}\to\mathcal{L}$ by specifying its action:
\begin{itemize}
    \item \textbf{On objects}: For any finite dimensional Lie superalgebra $L\in\mathcal{L}$, define $\mathcal{S}(L)=\mathrm{sol}(L)$ ;
    \item \textbf{On morphisms}: For any morphism $f:L\to M$ in $\mathcal{L}$, let $\mathcal{S}(f)$ be the restriction of $f$ to $\mathrm{sol}(L)$, that is,
    $\mathcal{S}(f) = f|_{\mathrm{sol}(L)}:\mathrm{sol}(L)\to\mathrm{sol}(M)$.
\end{itemize}

Then $\mathcal{S}$ is a functor from $\mathcal{L}$ to itself.
\begin{proof}
By Proposition 22 in \cite{ZT}, the category $L$ is well-defined. By Theorem \ref{sol12345}(1), for any homomorphism $f: L_{1} \to L_{2}$, $f(\text{sol}(L_{1})) \subseteq \text{sol}(L_{2})$.
This ensures that the restriction $\mathcal{S}(f)$ is well-defined.
To verify that $\mathcal{S}$ is a functor, we need to check two key functorial properties: the preservation of identity morphisms and the preservation of composition of morphisms.

(1) Preservation of Identity Morphisms.

Let $L$ be an object in $\mathcal{L}$, and let $\operatorname{id}_{L}: L \to L$ be its identity morphism.
By the definition of the functor $\mathcal{S}$, we have $ \mathcal{S}(L) = \operatorname{sol}(L)$ on objects,
and $\mathcal{S}(\operatorname{id}_L) = \operatorname{id}_L|_{\operatorname{sol}(L)}$ on morphisms.
The following we show that this restricted map is the identity morphism on $\mathcal{S}(L)$.
For any $x \in \operatorname{sol}(L)$, it follows directly that
$$\mathcal{S}(\operatorname{id}_L)(x) = \operatorname{id}_L|_{\operatorname{sol}(L)}(x) = \operatorname{id}_L(x) = x.$$

Therefore, $\mathcal{S}(\operatorname{id}_L) = \operatorname{id}_{\operatorname{sol}(L)} = \operatorname{id}_{\mathcal{S}(L)}$,
which proves that $\mathcal{S}$ preserves identity morphisms.

(2) Preservation of Composition of Morphisms.

Let $f: L \to M $ and $g: M \to N $ be morphisms in $\mathcal{L}$. We show that $\mathcal{S}(g \circ f) = \mathcal{S}(g) \circ \mathcal{S}(f)$.
For any $x \in \operatorname{sol}(L)$, we compute both sides:
\begin{equation*}
\mathcal{S}(g \circ f)(x)=(g \circ f)|_{\text{sol}(L)}(x)=(g \circ f)(x)=g(f(x)).
\end{equation*}
On the other hand,
\begin{equation*}
\mathcal{S}(g) \circ \mathcal{S}(f)(x)=g|_{\text{sol}(M)}(f|_{\text{sol}(L)}(x))=g(f(x)).
\end{equation*}
We conclude that $\mathcal{S}(g \circ f) = \mathcal{S}(g) \circ \mathcal{S}(f)$.
Since $\mathcal{S}$ preserves both identity morphisms and composition of morphisms, it is a functor from $\mathcal{L}$ to $\mathcal{L}$.
\end{proof}
\end{proposition}

\begin{theorem}
Suppose that $A,\ B,\ C$ are Lie superalgebras and let $$0 \to A \xrightarrow{\alpha} B \xrightarrow{\beta} C \to 0$$ be a short exact sequence.
Then there exist natural homomorphisms:
\begin{equation*}
\alpha': \mathrm{sol}(A) \to B, \quad \beta': \mathrm{sol}(B) \to \mathrm{sol}(C)
\end{equation*}
such that $\ker \beta' \subseteq \alpha'(\mathrm{sol}(A))$.
\end{theorem}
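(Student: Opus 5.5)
The plan is to take the obvious restrictions: $\alpha' := \alpha|_{\mathrm{sol}(A)}$ and $\beta' := \beta|_{\mathrm{sol}(B)}$. Since $\mathrm{sol}(A)$ and $\mathrm{sol}(B)$ need not be subspaces, "homomorphism" here is read as the restriction of a homomorphism. "Natural" is read in the sense of the functor $\mathcal{S}$ of the preceding proposition: $\beta'$ is $\mathcal{S}(\beta)$, because $\beta$ is surjective, hence a morphism in $\mathcal{L}$.

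The steps, in order:

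\begin{enumerate}[label=(\roman*)]
\item \emph{$\alpha'$ is well defined.} This is immediate, since it is just a restriction of $\alpha$ with values in $B$.
\item \emph{$\beta'$ lands in $\mathrm{sol}(C)$.} By exactness $\beta$ is surjective, so $\beta(B)=C$. Theorem \ref{sol12345}(1) then gives $\beta(\mathrm{sol}(B))\subseteq \mathrm{sol}(\beta(B))=\mathrm{sol}(C)$.
\item \emph{Bracket compatibility.} Whenever $x,y$ and $[x,y]$ lie in the relevant domains, $\alpha'$ and $\beta'$ respect brackets, because $\alpha$ and $\beta$ do.
\item \emph{Naturality.} Given a morphism of short exact sequences, the restricted squares commute, since they are restrictions of commuting squares. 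This is exactly the functoriality of $\mathcal{S}$ already verified.
\end{enumerate}

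The substantive claim is $\ker\beta'\subseteq \alpha'(\mathrm{sol}(A))$. Let $b\in \mathrm{sol}(B)$ with $\beta(b)=0$. Exactness at $B$ gives $b=\alpha(a)$ for a unique $a\in A$, since $\alpha$ is injective. It remains to show $a\in \mathrm{sol}(A)$. Take any $z\in A$. The subalgebra $\langle \alpha(z), b\rangle$ of $B$ is solvable because $b\in\mathrm{sol}(B)$. Since $\alpha$ is an injective homomorphism,
\[
\alpha(\langle z,a\rangle)=\langle \alpha(z),\alpha(a)\rangle=\langle\alpha(z),b\rangle ,
\]
and $\alpha$ restricts to an isomorphism $\langle z,a\rangle\cong\langle\alpha(z),b\rangle$. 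Hence $\langle z,a\rangle$ is solvable. As $z\in A$ was arbitrary, $a\in\mathrm{sol}(A)$, and so $b=\alpha'(a)\in\alpha'(\mathrm{sol}(A))$.

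The only delicate point is the identity $\alpha(\langle z,a\rangle)=\langle\alpha(z),\alpha(a)\rangle$. It holds because the image of a subalgebra is a subalgebra containing the generators, and the preimage under $\alpha$ of $\langle \alpha(z),\alpha(a)\rangle$ is a subalgebra of $A$ containing $z$ and $a$. The other place needing care is deciding what "natural homomorphism" should mean given that $\mathrm{sol}$ need not be a subalgebra. I would state explicitly that it means the restrictions compatible with $\mathcal{S}$, so the argument avoids needing any closure property of $\mathrm{sol}(A)$ or $\mathrm{sol}(B)$.
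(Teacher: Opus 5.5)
Your proposal is correct and follows the paper's proof essentially step for step. You use the same restrictions $\alpha'=\alpha|_{\mathrm{sol}(A)}$ and $\beta'=\beta|_{\mathrm{sol}(B)}$, and the same argument that $\beta(\mathrm{sol}(B))\subseteq\mathrm{sol}(C)$ via surjectivity; the paper proves this inline where you cite Theorem~\ref{sol12345}(1). The kernel argument is also the same: solvability of $\langle \alpha(z),b\rangle$ is pulled back along the injective map $\alpha$. Your extra remarks on how to read ``natural homomorphism'' go beyond what the paper states, but they do not change the argument.
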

\begin{proof}
Let $\alpha' = \alpha|_{\operatorname{sol}(A)}$ and $\beta' = \beta|_{\operatorname{sol}(B)}$.
First we show that $\beta'(\mathrm{sol}(B))\subseteq \operatorname{sol}(C)$.
Take $x\in\operatorname{sol}(B)$ and any $c\in C$.
Since $\beta$ is surjective, choose a $b\in B$ such that $\beta(b)=c$.
Since $\langle x,b\rangle$ is solvable and $\beta$ is a homomorphism,
$\langle\beta(x),c\rangle=\beta(\langle x,b\rangle)$ is also solvable.
Then $\beta(x)\in\operatorname{sol}(C)$.

Now let $x\in\ker\beta'\subseteq\operatorname{sol}(B)$.
Then $\beta(x)=0$, so $x\in\ker\beta=\operatorname{im}\alpha$.
Write $x=\alpha(a)$ for some $a\in A$. We prove that $a\in\operatorname{sol}(A)$.
For any $a'\in A$,
\[
\alpha(\langle a,a'\rangle)=\langle\alpha(a),\alpha(a')\rangle=\langle x,\alpha(a')\rangle,
\]
which is solvable because $x\in\operatorname{sol}(B)$.
Since $\alpha$ is an injective homomorphism, then $\langle a,a'\rangle$ must be solvable.
Thus $a\in\operatorname{sol}(A)$ and consequently $x=\alpha(a)\in\alpha'(\operatorname{sol}(A))$.
\end{proof}

\begin{theorem}
Suppose that $f: L \to M$ and $g: N \to M$  are morphisms in the category $\mathcal{L}$.
Define
$$P =L \times_M N=\{(x, y) \in L \oplus N \mid f(x) = g(y)\},$$
and let $p_L: P \to L$ and $p_N: P \to N$ be the canonical projections.
Then the following statements hold:
\begin{enumerate}[label=(\arabic*)]
\item The pair $(p_L,p_N)$ is the pullback of $f$ and $g$.

\item $\mathcal{S}(P) =\mathcal{S}(L) \times_{\mathcal{S}(M)} \mathcal{S}(N)$,
where
$$\mathcal{S}(L) \times_{\mathcal{S}(M)} \mathcal{S}(N)=\{(x, y) \in \mathcal{S}(L) \oplus \mathcal{S}(N) \mid \mathcal{S}(f)(x) = \mathcal{S}(g)(y)\}.$$
\end{enumerate}
\end{theorem}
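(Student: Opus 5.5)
The plan is to treat the two parts separately. Part (1) is essentially a standard fiber-product argument. Part (2) reduces to the fact that solvability passes to homomorphic images and to subalgebras, combined with the direct sum description of solvability from the proof of Theorem \ref{sol direct sum}.

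For (1), I first check that $P$ is a subsuperalgebra of $L\oplus N$. Since $f$ and $g$ are even homomorphisms, $P$ is a graded subspace: if $(x,y)\in P$, its homogeneous components again satisfy $f(x_{\alpha})=g(y_{\alpha})$. It is closed under the bracket because $f([x,x'])=[f(x),f(x')]=[g(y),g(y')]=g([y,y'])$.

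Next I show that $p_L$ and $p_N$ are morphisms of $\mathcal{L}$, i.e.\ surjective. Given $x\in L$, surjectivity of $g$ gives $y\in N$ with $g(y)=f(x)$, so $(x,y)\in P$ and $p_L(x,y)=x$. The argument for $p_N$ is symmetric, using surjectivity of $f$.

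For the universal property, take a superalgebra $Q$ with maps $q_L,q_N$ satisfying $f\circ q_L=g\circ q_N$. Define $u(z)=(q_L(z),q_N(z))$. This lands in $P$, is a homomorphism, satisfies $p_L u=q_L$ and $p_N u=q_N$, and is unique because an element of $P$ is determined by its two coordinates.

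The step I expect to be the main obstacle is the requirement in $\mathcal{L}$ that $u$ be surjective. This need not hold in general, so I would phrase (1) as the pullback in the category of Lie superalgebras, with the projections being morphisms of $\mathcal{L}$. If the category is meant strictly, I would add the remark that $u$ is a morphism of $\mathcal{L}$ exactly when the pair $(q_L,q_N)$ jointly surjects onto $P$.

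For (2), note that $\mathcal{S}(f)(x)=f(x)$ on $\operatorname{sol}(L)$, so the right-hand side is $\{(x,y)\mid x\in\operatorname{sol}(L),\ y\in\operatorname{sol}(N),\ f(x)=g(y)\}$. I prove the two inclusions.

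($\subseteq$) Let $(x,y)\in\operatorname{sol}(P)$ and take any $x'\in L$. Choose $y'\in N$ with $g(y')=f(x')$, which is possible since $g$ is surjective. Then $\langle (x,y),(x',y')\rangle$ is solvable. Its image under the homomorphism $p_L$ is $\langle x,x'\rangle$, which is therefore solvable. Hence $x\in\operatorname{sol}(L)$. The symmetric argument, using surjectivity of $f$, gives $y\in\operatorname{sol}(N)$.

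($\supseteq$) Let $x\in\operatorname{sol}(L)$ and $y\in\operatorname{sol}(N)$ with $f(x)=g(y)$, and take any $(x',y')\in P$. By definition, $\langle x,x'\rangle_L$ and $\langle y,y'\rangle_N$ are solvable. The proof of Theorem \ref{sol direct sum} shows that $\langle x,x'\rangle_L\oplus\langle y,y'\rangle_N$ is then a solvable subalgebra of $L\oplus N$. It contains the generators $(x,y)$ and $(x',y')$, so it contains $\langle (x,y),(x',y')\rangle_P$. A subalgebra of a solvable superalgebra is solvable, so $(x,y)\in\operatorname{sol}(P)$. This completes (2).
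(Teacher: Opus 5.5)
Your proposal is correct and follows the paper's route in both parts: checking that $P$ is a subalgebra, getting surjectivity of $p_L,p_N$ from that of $g,f$, using the factoring map $k\mapsto(u(k),v(k))$, and lifting $a\in L$ to $(a,b)\in P$ in part (2).

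\textbf{Part (1).} Your reservation is justified, and it exposes an error in the paper. The paper simply asserts that this factoring map is a morphism of $\mathcal{L}$ ``by construction,'' which is false in general. Take $A\neq 0$, $M=0$, $L=N=K=A$, $f=g=0$ and $u=v=\mathrm{id}_A$. Then $P=A\oplus A$, and the only possible factoring map is the diagonal $a\mapsto(a,a)$, which is not surjective. So $(p_L,p_N)$ is not a pullback in $\mathcal{L}$. Your reformulation (a pullback among all homomorphisms, with $p_L,p_N$ lying in $\mathcal{L}$) is the right statement.

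\textbf{Part (2).} Your justification is cleaner than the paper's appeal to Theorem~\ref{sol direct sum}. The proof of that theorem claims $\langle x,y\rangle=\langle x_1,y_1\rangle_{L_1}\oplus\langle x_2,y_2\rangle_{L_2}$, but only the inclusion holds in general; for example, take $x=y=(1,1)$ in the abelian $\mathbb{F}\oplus\mathbb{F}$. You use the projection $p_L$ for one direction and the containment in the direct sum for the other. That combination is exactly what makes the needed solvability equivalence valid.
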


\begin{proof}
(1) It is clear that $P$ is a subalgebra of $L \oplus N$.
Take any $x \in L$. Since $g$ is surjective, there exists some $y \in N$ such that $g(y) = f(x)$.
It follows that $p_L$ is surjective. Similarly, $p_N$ is surjective.

Let $K$ be an object in $\mathcal{L}$, and let $u: K \to L$ and $v: K \to N$ be surjective homomorphisms satisfying $f \circ u = g \circ v$.
Define a map $h:K\rightarrow P$ by $h(k) = (u(k), v(k))$ for all $k \in K$.
The condition $f \circ u = g \circ v$ ensures that $f(u(k)) = g(v(k))$, so $h(k) \in P$.
By construction, $h$ is a morphism in $\mathcal{L}$.
For the uniqueness of $h$, suppose $h': K \to P$ is another morphism such that $p_L \circ h' = u$ and $p_N \circ h' = v$.
Then for any $k \in K$,
$$h'(k) = (p_L(h'(k)), p_N(h'(k))) = (u(k), v(k)) = h(k).$$
Therefore, $h'=h$.

(2) Suppose that $(x, y) \in \mathcal{S}(P)$. Then for any $(a, b) \in P$, the subalgebra $\langle (x, y), (a, b) \rangle$ is solvable.
By Theorem \ref{sol direct sum}, this is equivalent to $\langle x, a \rangle$ and $\langle y, b \rangle$ being solvable.
Take any $a \in L$. Since $g$ is surjective, there exists $b \in N$ such that $g(b) = f(a)$. Hence $(a, b) \in P$ and $\langle x, a \rangle$ is solvable,
which proves $x \in \mathcal{S}(L)$. Similarly, $y \in \mathcal{S}(N)$. Moreover, the condition $f(x) = g(y)$ holds,
it follows that
$$\mathcal{S}(P) \subseteq\mathcal{S}(L) \times_{\mathcal{S}(M)} \mathcal{S}(N).$$
Conversely, suppose that $(x, y) \in \mathcal{S}(L) \times_{\mathcal{S}(M)} \mathcal{S}(N)$.
Then for any $(a, b) \in P$, the subalgebras $\langle x, a \rangle$ and $\langle y, b \rangle$ are solvable.
Applying Theorem \ref{sol direct sum} again, we conclude that $\langle (x, y), (a, b) \rangle$ is solvable, which means $(x, y) \in \mathcal{S}(P)$.
Thus, $$\mathcal{S}(L) \times_{\mathcal{S}(M)} \mathcal{S}(N)\subseteq\mathcal{S}(P).$$
Therefore, $$\mathcal{S}(P) = \mathcal{S}(L) \times_{\mathcal{S}(M)} \mathcal{S}(N).$$
\end{proof}

\section{The Solvable Graph of Lie Superalgebras}
\begin{definition}\label{graphsol}
Let $L$ be a finite dimensional non-solvable Lie superalgebra.
The solvable graph of $L$, denoted by $\mathfrak{S}(L)=(V,E)$,  is defined as follows:
\begin{enumerate}[label=(\arabic*)]
\item  The vertex set $V=L\setminus (\operatorname{sol}(L)\cup \{0\})$.

\item For distinct vertices $x,y\in V$, $\{x,y\}\in E$ if and only if the subalgebra $\langle x,y\rangle$ is solvable.
\end{enumerate}
\end{definition}

\begin{definition}
Let that $L$ be a finite dimensional non-solvable Lie superalgebra.
The non-solvable graph of $L$, denoted by $\mathfrak{S}(L)^{c}=(V,E)$,  is defined as follows:
\begin{enumerate}[label=(\arabic*)]
\item The vertex set $V=L\setminus (\operatorname{sol}(L)\cup \{0\})$.

\item For distinct vertices $x,y\in V$, $\{x,y\}\in E$ if and only if the subalgebra $\langle x,y\rangle$ is non-solvable.
\end{enumerate}
\end{definition}

\begin{definition}
Let $L$ be a finite dimensional non-solvable Lie superalgebra with char $\mathbb{F}\geq3$ and
$\mathfrak{G}(L)=(V,E)$ be its solvable graph.
We define the solvability measure $\nu(L)$ of $L$ as:
$$\nu(L) = 1 - \frac{2 |E|}{|V|(|V| - 1)},$$
where $|E|$ and $|V|$ denote the number of elements in $E$ and $V$, respectively.
\end{definition}
Since $|V|\geq 2$, the definition is well-defined.

\begin{example}
Let $L=L_{\overline{0}}\oplus L_{\overline{1}}$ be a Lie superalgebra, where $L_{\overline{0}}={\rm{span}}\{h\}$ and
$L_{\overline{1}}={\rm{span}}\{x,y\}$. Suppose that the Lie bracket is defined by the following relations:
$$[h,x]=x,\ [h,y]=-y,\ [x,y]=h,\ [x,x]=[y,y]=0.$$
 Its  solvable graph is shown as follows:
\begin{longtable}{m{16cm}<{\centering}}
        \toprule
         solvable graph\\ \hline
        \endhead
        \bottomrule
        \endfoot
        \bottomrule
        \endlastfoot
       \\
       \begin{tikzpicture}
\fill(5.000000*1.2,0.000000*0.8)circle(2pt)coordinate(dot);
\node[right] at (dot){$h$};
\fill(4.851480*1.2,1.209610*0.8)circle(2pt)coordinate(dot);
\node[right] at (dot){$x$};
\fill(4.414740*1.2,2.347360*0.8)circle(2pt)coordinate(dot);
\node[right] at (dot){$y$};
\fill(3.695964*1.2,3.366724*0.8)circle(2pt)coordinate(dot);
\node[right] at (dot){$2h$};
\fill(2.720104*1.2,4.195360*0.8)circle(2pt)coordinate(dot);
\node[right] at (dot){$2x$};
\fill(1.545084*1.2,4.755284*0.8)circle(2pt)coordinate(dot);
\node[right] at (dot){$2y$};
\fill(1.545084*1.2,-4.755284*0.8)circle(2pt)coordinate(dot);
\node[right] at (dot){$h+x+2y$};
\fill(2.720104*1.2,-4.195360*0.8)circle(2pt)coordinate(dot);
\node[right] at (dot){$2h+2x+y$};
\fill(3.695964*1.2,-3.366724*0.8)circle(2pt)coordinate(dot);
\node[right] at (dot){$2h+x+2y$};
\fill(4.414740*1.2,-2.347360*0.8)circle(2pt)coordinate(dot);
\node[right] at (dot){$h+2x+2y$};
\fill(4.851480*1.2,-1.209610*0.8)circle(2pt)coordinate(dot);
\node[right] at (dot){$2h+2x+2y$};

\fill(-1.062784*1.2,4.885640*0.8)circle(2pt)coordinate(dot);
\node[left] at (dot){$h+y$};
\fill(-2.269950*1.2,4.455034*0.8)circle(2pt)coordinate(dot);
\node[left] at (dot){$x+y$};
\fill(-3.280294*1.2,3.773550*0.8)circle(2pt)coordinate(dot);
\node[left] at (dot){$h+2x$};
\fill(-4.045084*1.2,2.938924*0.8)circle(2pt)coordinate(dot);
\node[left] at (dot){$2x+h$};
\fill(-4.546740*1.2,2.080734*0.8)circle(2pt)coordinate(dot);
\node[left] at (dot){$h+2y$};
\fill(-4.797464*1.2,1.408664*0.8)circle(2pt)coordinate(dot);
\node[left] at (dot){$2h+y$};
\fill(-4.851480*1.2,0.000000*0.8)circle(2pt)coordinate(dot);
\node[left] at (dot){$x+2y$};
\fill(-4.797464*1.2,-1.408664*0.8)circle(2pt)coordinate(dot);
\node[left] at (dot){$2x+y$};
\fill(-4.546740*1.2,-2.080734*0.8)circle(2pt)coordinate(dot);
\node[left] at (dot){$2h+2x$};
\fill(-4.045084*1.2,-2.938924*0.8)circle(2pt)coordinate(dot);
\node[left] at (dot){$2h+2y$};
\fill(-3.280294*1.2,-3.773550*0.8)circle(2pt)coordinate(dot);
\node[left] at (dot){$2x+2y$};
\fill(-2.269950*1.2,-4.455034*0.8)circle(2pt)coordinate(dot);
\node[left] at (dot){$h+x+y$};
\fill(-1.062784*1.2,-4.885640*0.8)circle(2pt)coordinate(dot);
\node[left] at (dot){$2h+x+y$};

\fill(0.245144*1.2,4.993920*0.8)circle(2pt)coordinate(dot);
\node[above] at (dot){$h+x$};
\fill(0.245144*1.2,-4.993920*0.8)circle(2pt)coordinate(dot);
\node[below] at (dot){$h+2x+y$};

\coordinate (v0) at (5.000000*1.2,0.000000*0.8);       
\coordinate (v1) at (4.851480*1.2,1.209610*0.8);       
\coordinate (v2) at (4.414740*1.2,2.347360*0.8);       
\coordinate (v3) at (3.695964*1.2,3.366724*0.8);       
\coordinate (v4) at (2.720104*1.2,4.195360*0.8);       
\coordinate (v5) at (1.545084*1.2,4.755284*0.8);       
\coordinate (v6) at (0.245144*1.2,4.993920*0.8);       
\coordinate (v7) at (-1.062784*1.2,4.885640*0.8);      
\coordinate (v8) at (-2.269950*1.2,4.455034*0.8);      
\coordinate (v9) at (-3.280294*1.2,3.773550*0.8);      
\coordinate (v10) at (-4.045084*1.2,2.938924*0.8);     
\coordinate (v11) at (-4.546740*1.2,2.080734*0.8);     
\coordinate (v12) at (-4.797464*1.2,1.408664*0.8);     
\coordinate (v13) at (-4.851480*1.2,0.000000*0.8);     
\coordinate (v14) at (-4.797464*1.2,-1.408664*0.8);    
\coordinate (v15) at (-4.546740*1.2,-2.080734*0.8);    
\coordinate (v16) at (-4.045084*1.2,-2.938924*0.8);    
\coordinate (v17) at (-3.280294*1.2,-3.773550*0.8);    
\coordinate (v18) at (-2.269950*1.2,-4.455034*0.8);    
\coordinate (v19) at (-1.062784*1.2,-4.885640*0.8);    
\coordinate (v20) at (0.245144*1.2,-4.993920*0.8);     
\coordinate (v21) at (1.545084*1.2,-4.755284*0.8);     
\coordinate (v22) at (2.720104*1.2,-4.195360*0.8);     
\coordinate (v23) at (3.695964*1.2,-3.366724*0.8);     
\coordinate (v24) at (4.414740*1.2,-2.347360*0.8);     
\coordinate (v25) at (4.851480*1.2,-1.209610*0.8);     

\draw[blue, thin] (v1) -- (v2) -- (v0) -- (v4) -- (v5) -- (v3) -- (v1);
\draw[blue, thin] (v1) -- (v5) -- (v0) -- (v2) -- (v4) -- (v3) -- (v2);
\draw[blue, thin] (v1) -- (v3) -- (v5) -- (v1); 

\draw[blue, thin] (v1) -- (v8) -- (v2);
\draw[blue, thin] (v1) -- (v13) -- (v5);
\draw[blue, thin] (v4) -- (v14) -- (v5);
\draw[blue, thin] (v4) -- (v17) -- (v5);
\draw[blue, thin] (v2) -- (v13) -- (v4);
\draw[blue, thin] (v2) -- (v17) -- (v4);

\draw[blue, thin] (v1) -- (v6) -- (v0);
\draw[blue, thin] (v1) -- (v9) -- (v0);
\draw[blue, thin] (v4) -- (v10) -- (v0);
\draw[blue, thin] (v4) -- (v15) -- (v3);
\draw[blue, thin] (v0) -- (v15) -- (v4);
\draw[blue, thin] (v3) -- (v6) -- (v1);

\draw[blue, thin] (v2) -- (v7) -- (v0);
\draw[blue, thin] (v2) -- (v11) -- (v0);
\draw[blue, thin] (v5) -- (v12) -- (v3);
\draw[blue, thin] (v5) -- (v16) -- (v3);
\draw[blue, thin] (v0) -- (v16) -- (v5);
\draw[blue, thin] (v3) -- (v7) -- (v2);

\draw[blue, thin] (v8) -- (v13) -- (v14) -- (v17) -- (v8); 
\draw[blue, thin] (v8) -- (v14); 
\draw[blue, thin] (v13) -- (v17); 

\draw[blue, thin] (v6) -- (v9) -- (v10) -- (v15) -- (v6); 
\draw[blue, thin] (v6) -- (v10); 
\draw[blue, thin] (v9) -- (v15); 
\draw[blue, thin] (v6) -- (v15); 
\draw[blue, thin] (v9) -- (v10); 

\draw[blue, thin] (v7) -- (v11) -- (v12) -- (v16) -- (v7); 
\draw[blue, thin] (v7) -- (v12); 
\draw[blue, thin] (v11) -- (v16); 
\draw[blue, thin] (v7) -- (v16); 
\draw[blue, thin] (v11) -- (v12); 

\draw[blue, thin] (v1) -- (v0);    
\draw[blue, thin] (v0) -- (v3);    
\draw[blue, thin] (v4) -- (v6);    
\draw[blue, thin] (v5) -- (v7);    
\draw[blue, thin] (v4) -- (v8);    
\draw[blue, thin] (v5) -- (v8);    
\draw[blue, thin] (v4) -- (v9);    
\draw[blue, thin] (v4) -- (v10);   
\draw[blue, thin] (v5) -- (v11);   
\draw[blue, thin] (v5) -- (v12);   

\draw[blue, thin] (v0) -- (v12);   
\draw[blue, thin] (v1) -- (v4);
\draw[blue, thin] (v1) -- (v10);
\draw[blue, thin] (v1) -- (v14);
\draw[blue, thin] (v1) -- (v15);
\draw[blue, thin] (v1) -- (v17);
\draw[blue, thin] (v2) -- (v5);
\draw[blue, thin] (v2) -- (v12);
\draw[blue, thin] (v2) -- (v14);
\draw[blue, thin] (v2) -- (v16);

\draw[blue, thin] (v3) -- (v9);
\draw[blue, thin] (v3) -- (v10);

\fill(v0)circle(2pt)coordinate(dot);\node[right] at (dot){$h$};
\fill(v1)circle(2pt)coordinate(dot);\node[right] at (dot){$x$};
\fill(v2)circle(2pt)coordinate(dot);\node[right] at (dot){$y$};
\fill(v3)circle(2pt)coordinate(dot);\node[right] at (dot){$2h$};
\fill(v4)circle(2pt)coordinate(dot);\node[right] at (dot){$2x$};
\fill(v5)circle(2pt)coordinate(dot);\node[right] at (dot){$2y$};
\fill(v6)circle(2pt)coordinate(dot);\node[above] at (dot){$h+x$};
\fill(v7)circle(2pt)coordinate(dot);\node[left] at (dot){$h+y$};
\fill(v8)circle(2pt)coordinate(dot);\node[left] at (dot){$x+y$};
\fill(v9)circle(2pt)coordinate(dot);\node[left] at (dot){$h+2x$};
\fill(v10)circle(2pt)coordinate(dot);\node[left] at (dot){$2x+h$};
\fill(v11)circle(2pt)coordinate(dot);\node[left] at (dot){$h+2y$};
\fill(v12)circle(2pt)coordinate(dot);\node[left] at (dot){$2h+y$};
\fill(v13)circle(2pt)coordinate(dot);\node[left] at (dot){$x+2y$};
\fill(v14)circle(2pt)coordinate(dot);\node[left] at (dot){$2x+y$};
\fill(v15)circle(2pt)coordinate(dot);\node[left] at (dot){$2h+2x$};
\fill(v16)circle(2pt)coordinate(dot);\node[left] at (dot){$2h+2y$};
\fill(v17)circle(2pt)coordinate(dot);\node[left] at (dot){$2x+2y$};
\fill(v18)circle(2pt)coordinate(dot);\node[left] at (dot){$h+x+y$};
\fill(v19)circle(2pt)coordinate(dot);\node[left] at (dot){$2h+x+y$};
\fill(v20)circle(2pt)coordinate(dot);\node[below] at (dot){$h+2x+y$};
\fill(v21)circle(2pt)coordinate(dot);\node[right] at (dot){$h+x+2y$};
\fill(v22)circle(2pt)coordinate(dot);\node[right] at (dot){$2h+2x+y$};
\fill(v23)circle(2pt)coordinate(dot);\node[right] at (dot){$2h+x+2y$};
\fill(v24)circle(2pt)coordinate(dot);\node[right] at (dot){$h+2x+2y$};
\fill(v25)circle(2pt)coordinate(dot);\node[right] at (dot){$2h+2x+2y$};
\end{tikzpicture}
\end{longtable}

\begin{longtable}{m{16cm}<{\centering}}
        \toprule
         non-solvable graph\\ \hline
        \endhead
        \bottomrule
        \endfoot
        \bottomrule
        \endlastfoot
       \\
       \begin{tikzpicture}
\coordinate (v0) at (5.000000*1.2,0.000000*0.8);       
\coordinate (v1) at (4.851480*1.2,1.209610*0.8);   
\coordinate (v2) at (4.414740*1.2,2.347360*0.8);   
\coordinate (v3) at (3.695964*1.2,3.366724*0.8);   
\coordinate (v4) at (2.720104*1.2,4.195360*0.8);   
\coordinate (v5) at (1.545084*1.2,4.755284*0.8);   
\coordinate (v6) at (0.245144*1.2,4.993920*0.8);   
\coordinate (v7) at (-1.062784*1.2,4.885640*0.8);  
\coordinate (v8) at (-2.269950*1.2,4.455034*0.8);  
\coordinate (v9) at (-3.280294*1.2,3.773550*0.8);  
\coordinate (v10) at (-4.045084*1.2,2.938924*0.8); 
\coordinate (v11) at (-4.546740*1.2,2.080734*0.8); 
\coordinate (v12) at (-4.797464*1.2,1.408664*0.8); 
\coordinate (v13) at (-4.851480*1.2,0.000000*0.8); 
\coordinate (v14) at (-4.797464*1.2,-1.408664*0.8);
\coordinate (v15) at (-4.546740*1.2,-2.080734*0.8);
\coordinate (v16) at (-4.045084*1.2,-2.938924*0.8);
\coordinate (v17) at (-3.280294*1.2,-3.773550*0.8);
\coordinate (v18) at (-2.269950*1.2,-4.455034*0.8);
\coordinate (v19) at (-1.062784*1.2,-4.885640*0.8);
\coordinate (v20) at (0.245144*1.2,-4.993920*0.8);
\coordinate (v21) at (1.545084*1.2,-4.755284*0.8);
\coordinate (v22) at (2.720104*1.2,-4.195360*0.8);
\coordinate (v23) at (3.695964*1.2,-3.366724*0.8);
\coordinate (v24) at (4.414740*1.2,-2.347360*0.8);
\coordinate (v25) at (4.851480*1.2,-1.209610*0.8);

\draw[red, thin] 
(v0)--(v8) (v0)--(v13) (v0)--(v14) (v0)--(v17) (v0)--(v18) (v0)--(v19) (v0)--(v20) (v0)--(v21) (v0)--(v22) (v0)--(v23) (v0)--(v24) (v0)--(v25)
(v1)--(v7) (v1)--(v11) (v1)--(v12) (v1)--(v16) (v1)--(v18) (v1)--(v19) (v1)--(v20) (v1)--(v21) (v1)--(v22) (v1)--(v23) (v1)--(v24) (v1)--(v25)
(v2)--(v6) (v2)--(v9) (v2)--(v10) (v2)--(v15) (v2)--(v18) (v2)--(v19) (v2)--(v20) (v2)--(v21) (v2)--(v22) (v2)--(v23) (v2)--(v24) (v2)--(v25)
(v3)--(v8) (v3)--(v13) (v3)--(v14) (v3)--(v17) (v3)--(v18) (v3)--(v19) (v3)--(v20) (v3)--(v21) (v3)--(v22) (v3)--(v23) (v3)--(v24) (v3)--(v25)
(v4)--(v7) (v4)--(v11) (v4)--(v12) (v4)--(v16) (v4)--(v18) (v4)--(v19) (v4)--(v20) (v4)--(v21) (v4)--(v22) (v4)--(v23) (v4)--(v24) (v4)--(v25)
(v5)--(v6) (v5)--(v9) (v5)--(v10) (v5)--(v15) (v5)--(v18) (v5)--(v19) (v5)--(v20) (v5)--(v21) (v5)--(v22) (v5)--(v23) (v5)--(v24) (v5)--(v25)
(v6)--(v2) (v6)--(v5) (v6)--(v7) (v6)--(v8) (v6)--(v11) (v6)--(v12) (v6)--(v13) (v6)--(v14) (v6)--(v16) (v6)--(v17) (v6)--(v18) (v6)--(v19) (v6)--(v20) (v6)--(v21) (v6)--(v22) (v6)--(v23) (v6)--(v24) (v6)--(v25)
(v7)--(v1) (v7)--(v4) (v7)--(v6)  (v7)--(v8) (v7)--(v9) (v7)--(v10) (v7)--(v13) (v7)--(v14) (v7)--(v15) (v7)--(v17) (v7)--(v18) (v7)--(v19) (v7)--(v20) (v7)--(v21) (v7)--(v22) (v7)--(v23) (v7)--(v24) (v7)--(v25)
(v8)--(v0) (v8)--(v3) (v8)--(v6) (v8)--(v7) (v8)--(v9) (v8)--(v10) (v8)--(v11) (v8)--(v12) (v8)--(v15) (v8)--(v16) (v8)--(v18) (v8)--(v19) (v8)--(v20) (v8)--(v21) (v8)--(v22) (v8)--(v23) (v8)--(v24) (v8)--(v25)
(v9)--(v2) (v9)--(v5) (v9)--(v7) (v9)--(v8) (v9)--(v11) (v9)--(v12) (v9)--(v13) (v9)--(v14) (v9)--(v16) (v9)--(v17) (v9)--(v18) (v9)--(v19) (v9)--(v20) (v9)--(v21) (v9)--(v22) (v9)--(v23) (v9)--(v24) (v9)--(v25)
(v10)--(v2) (v10)--(v5)(v10)--(v7) (v10)--(v8) (v10)--(v11) (v10)--(v12) (v10)--(v13) (v10)--(v14) (v10)--(v16) (v10)--(v17) (v10)--(v18) (v10)--(v19) (v10)--(v20) (v10)--(v21) (v10)--(v22) (v10)--(v23) (v10)--(v24) (v10)--(v25)
(v11)--(v1)(v11)--(v4) (v11)--(v6) (v11)--(v8) (v11)--(v9) (v11)--(v10) (v11)--(v13) (v11)--(v14) (v11)--(v15) (v11)--(v17) (v11)--(v18) (v11)--(v19) (v11)--(v20) (v11)--(v21) (v11)--(v22) (v11)--(v23) (v11)--(v24) (v11)--(v25)
(v12)--(v1) (v12)--(v4) (v12)--(v6) (v12)--(v8) (v12)--(v9) (v12)--(v10) (v12)--(v13) (v12)--(v14) (v12)--(v15) (v12)--(v17) (v12)--(v18) (v12)--(v19) (v12)--(v20) (v12)--(v21) (v12)--(v22) (v12)--(v23) (v12)--(v24) (v12)--(v25)
(v13)--(v0) (v13)--(v3) (v13)--(v6) (v13)--(v7) (v13)--(v9) (v13)--(v10) (v13)--(v11) (v13)--(v12) (v13)--(v15) (v13)--(v16) (v13)--(v18) (v13)--(v19) (v13)--(v20) (v13)--(v21) (v13)--(v22) (v13)--(v23) (v13)--(v24) (v13)--(v25)
(v14)--(v0) (v14)--(v3) (v14)--(v6) (v14)--(v7) (v14)--(v9) (v14)--(v10) (v14)--(v11) (v14)--(v12) (v14)--(v15) (v14)--(v16) (v14)--(v18) (v14)--(v19) (v14)--(v20) (v14)--(v21) (v14)--(v22) (v14)--(v23) (v14)--(v24) (v14)--(v25)
(v15)--(v2) (v15)--(v5) (v15)--(v7) (v15)--(v8) (v15)--(v11) (v15)--(v12) (v15)--(v13) (v15)--(v14) (v15)--(v16) (v15)--(v17) (v15)--(v18) (v15)--(v19) (v15)--(v20) (v15)--(v21) (v15)--(v22) (v15)--(v23) (v15)--(v24) (v15)--(v25)
(v16)--(v1) (v16)--(v4) (v16)--(v6) (v16)--(v8) (v16)--(v9) (v16)--(v10) (v16)--(v13) (v16)--(v14) (v16)--(v15) (v16)--(v17) (v16)--(v18) (v16)--(v19) (v16)--(v20) (v16)--(v21) (v16)--(v22) (v16)--(v23) (v16)--(v24) (v16)--(v25)
(v17)--(v0) (v17)--(v3) (v17)--(v6) (v17)--(v7) (v17)--(v9) (v17)--(v10) (v17)--(v11) (v17)--(v12) (v17)--(v15) (v17)--(v16) (v17)--(v18) (v17)--(v19) (v17)--(v20) (v17)--(v21) (v17)--(v22) (v17)--(v23) (v17)--(v24) (v17)--(v25)
(v18)--(v0) (v18)--(v1) (v18)--(v2) (v18)--(v3) (v18)--(v4) (v18)--(v5) (v18)--(v6) (v18)--(v7) (v18)--(v8) (v18)--(v9) (v18)--(v10) (v18)--(v11) (v18)--(v12) (v18)--(v13) (v18)--(v14) (v18)--(v15) (v18)--(v16) (v18)--(v17) (v18)--(v19) (v18)--(v20) (v18)--(v21) (v18)--(v22) (v18)--(v23) (v18)--(v24) (v18)--(v25)
(v19)--(v0) (v19)--(v1) (v19)--(v2) (v19)--(v3) (v19)--(v4) (v19)--(v5) (v19)--(v6) (v19)--(v7) (v19)--(v8) (v19)--(v9) (v19)--(v10) (v19)--(v11) (v19)--(v12) (v19)--(v13) (v19)--(v14) (v19)--(v15) (v19)--(v16) (v19)--(v17) (v19)--(v18) (v19)--(v20) (v19)--(v21) (v19)--(v22) (v19)--(v23) (v19)--(v24) (v19)--(v25)
(v20)--(v0) (v20)--(v1) (v20)--(v2) (v20)--(v3) (v20)--(v4) (v20)--(v5) (v20)--(v6) (v20)--(v7) (v20)--(v8) (v20)--(v9) (v20)--(v10) (v20)--(v11) (v20)--(v12) (v20)--(v13) (v20)--(v14) (v20)--(v15) (v20)--(v16) (v20)--(v17) (v20)--(v18) (v20)--(v19) (v20)--(v21) (v20)--(v22) (v20)--(v23) (v20)--(v24) (v20)--(v25)
(v21)--(v0) (v21)--(v1) (v21)--(v2) (v21)--(v3) (v21)--(v4) (v21)--(v5) (v21)--(v6) (v21)--(v7) (v21)--(v8) (v21)--(v9) (v21)--(v10) (v21)--(v11) (v21)--(v12) (v21)--(v13) (v21)--(v14) (v21)--(v15) (v21)--(v16) (v21)--(v17) (v21)--(v18) (v21)--(v19) (v21)--(v20) (v21)--(v22) (v21)--(v23) (v21)--(v24) (v21)--(v25)
(v22)--(v0) (v22)--(v1) (v22)--(v2) (v22)--(v3) (v22)--(v4) (v22)--(v5) (v22)--(v6) (v22)--(v7) (v22)--(v8) (v22)--(v9) (v22)--(v10) (v22)--(v11) (v22)--(v12) (v22)--(v13) (v22)--(v14) (v22)--(v15) (v22)--(v16) (v22)--(v17) (v22)--(v18) (v22)--(v19) (v22)--(v20) (v22)--(v21) (v22)--(v23) (v22)--(v24) (v22)--(v25)
(v23)--(v0) (v23)--(v1) (v23)--(v2) (v23)--(v3) (v23)--(v4) (v23)--(v5) (v23)--(v6) (v23)--(v7) (v23)--(v8) (v23)--(v9) (v23)--(v10) (v23)--(v11) (v23)--(v12) (v23)--(v13) (v23)--(v14) (v23)--(v15) (v23)--(v16) (v23)--(v17) (v23)--(v18) (v23)--(v19) (v23)--(v20) (v23)--(v21) (v23)--(v22) (v23)--(v24) (v23)--(v25)
(v24)--(v0) (v24)--(v1) (v24)--(v2) (v24)--(v3) (v24)--(v4) (v24)--(v5) (v24)--(v6) (v24)--(v7) (v24)--(v8) (v24)--(v9) (v24)--(v10) (v24)--(v11) (v24)--(v12) (v24)--(v13) (v24)--(v14) (v24)--(v15) (v24)--(v16) (v24)--(v17) (v24)--(v18) (v24)--(v19) (v24)--(v20) (v24)--(v21) (v24)--(v22) (v24)--(v23) (v24)--(v25)
(v25)--(v0) (v25)--(v1) (v25)--(v2) (v25)--(v3) (v25)--(v4) (v25)--(v5) (v25)--(v6) (v25)--(v7) (v25)--(v8) (v25)--(v9) (v25)--(v10) (v25)--(v11) (v25)--(v12) (v25)--(v13) (v25)--(v14) (v25)--(v15) (v25)--(v16) (v25)--(v17) (v25)--(v18) (v25)--(v19) (v25)--(v20) (v25)--(v21) (v25)--(v22) (v25)--(v23) (v25)--(v24);

\fill(v0)circle(2pt)coordinate(dot);\node[right] at (dot){$h$};
\fill(v1)circle(2pt)coordinate(dot);\node[right] at (dot){$x$};
\fill(v2)circle(2pt)coordinate(dot);\node[right] at (dot){$y$};
\fill(v3)circle(2pt)coordinate(dot);\node[right] at (dot){$2h$};
\fill(v4)circle(2pt)coordinate(dot);\node[right] at (dot){$2x$};
\fill(v5)circle(2pt)coordinate(dot);\node[right] at (dot){$2y$};
\fill(v6)circle(2pt)coordinate(dot);\node[above] at (dot){$h+x$};
\fill(v7)circle(2pt)coordinate(dot);\node[left] at (dot){$h+y$};
\fill(v8)circle(2pt)coordinate(dot);\node[left] at (dot){$x+y$};
\fill(v9)circle(2pt)coordinate(dot);\node[left] at (dot){$h+2x$};
\fill(v10)circle(2pt)coordinate(dot);\node[left] at (dot){$2x+h$};
\fill(v11)circle(2pt)coordinate(dot);\node[left] at (dot){$h+2y$};
\fill(v12)circle(2pt)coordinate(dot);\node[left] at (dot){$2h+y$};
\fill(v13)circle(2pt)coordinate(dot);\node[left] at (dot){$x+2y$};
\fill(v14)circle(2pt)coordinate(dot);\node[left] at (dot){$2x+y$};
\fill(v15)circle(2pt)coordinate(dot);\node[left] at (dot){$2h+2x$};
\fill(v16)circle(2pt)coordinate(dot);\node[left] at (dot){$2h+2y$};
\fill(v17)circle(2pt)coordinate(dot);\node[left] at (dot){$2x+2y$};
\fill(v18)circle(2pt)coordinate(dot);\node[left] at (dot){$h+x+y$};
\fill(v19)circle(2pt)coordinate(dot);\node[left] at (dot){$2h+x+y$};
\fill(v20)circle(2pt)coordinate(dot);\node[below] at (dot){$h+2x+y$};
\fill(v21)circle(2pt)coordinate(dot);\node[right] at (dot){$h+x+2y$};
\fill(v22)circle(2pt)coordinate(dot);\node[right] at (dot){$2h+2x+y$};
\fill(v23)circle(2pt)coordinate(dot);\node[right] at (dot){$2h+x+2y$};
\fill(v24)circle(2pt)coordinate(dot);\node[right] at (dot){$h+2x+2y$};
\fill(v25)circle(2pt)coordinate(dot);\node[right] at (dot){$2h+2x+2y$};
\end{tikzpicture}
\end{longtable}
\end{example}

\noindent \textbf{Remark.} Since $\operatorname{sol}(L)=\emptyset$, the vertex set is $L\backslash\{0\}$.

\begin{proposition}\label{invariants}
Let $L_{1}$ and $L_{2}$ be finite dimensional non-solvable Lie superalgebras.
If $L_{1}$ and $L_{2}$ are isomorphic as Lie superalgebras, then the following conclusions hold:
\begin{enumerate}[label=(\arabic*)]
\item The solvable graphs $\mathfrak{S}(L_{1})$ and $\mathfrak{S}(L_{2})$ are isomorphic as graphs.

\item The non-solvable graphs $\mathfrak{S}(L_{1})^{c}$ and $\mathfrak{S}(L_{2})^{c}$ are isomorphic as graphs.
\end{enumerate}
\end{proposition}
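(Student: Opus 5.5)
The plan is to take an isomorphism $\varphi: L_1\to L_2$ of Lie superalgebras and show that its restriction to the vertex set of $\mathfrak{S}(L_1)$ is a graph isomorphism onto $\mathfrak{S}(L_2)$; the same map will then serve for the non-solvable graphs.

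\textbf{Step 1: the vertex sets correspond.} Since $\varphi$ is an isomorphism, $\ker\varphi=0\subseteq\operatorname{sol}(L_1)$. Theorem \ref{sol12345}(2) then gives $\varphi(\operatorname{sol}(L_1))=\operatorname{sol}(\varphi(L_1))=\operatorname{sol}(L_2)$. Because $\varphi$ is a bijection with $\varphi(0)=0$, it maps $L_1\setminus(\operatorname{sol}(L_1)\cup\{0\})$ bijectively onto $L_2\setminus(\operatorname{sol}(L_2)\cup\{0\})$. So the restriction $f=\varphi|_{V_1}$ is a bijection $V_1\to V_2$. One could also argue directly: apply Theorem \ref{sol12345}(1) to both $\varphi$ and $\varphi^{-1}$.

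\textbf{Step 2: adjacency is preserved.} Take distinct $x,y\in V_1$; their images $\varphi(x),\varphi(y)$ are distinct by injectivity. A homomorphism maps the subalgebra generated by a set onto the subalgebra generated by its image, so $\varphi(\langle x,y\rangle)=\langle\varphi(x),\varphi(y)\rangle$, as already used in the proof of Theorem \ref{sol12345}. Restricting $\varphi$ gives an isomorphism of Lie superalgebras $\langle x,y\rangle\cong\langle\varphi(x),\varphi(y)\rangle$. It preserves derived series, $\varphi(\langle x,y\rangle^{(k)})=\langle\varphi(x),\varphi(y)\rangle^{(k)}$, so one side is solvable if and only if the other is. Hence $\{x,y\}\in E_1$ if and only if $\{f(x),f(y)\}\in E_2$, which proves (1).

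\textbf{Step 3: the complements.} For (2), the vertex sets are the same as in Step 1, and adjacency is the negation of the condition in Step 2. The same bijection $f$ therefore satisfies: $\langle x,y\rangle$ is non-solvable if and only if $\langle f(x),f(y)\rangle$ is non-solvable.

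The argument is essentially routine. The only point needing care is Step 1, namely that $\varphi$ maps $\operatorname{sol}(L_1)$ exactly onto $\operatorname{sol}(L_2)$, which is what makes the vertex sets correspond. Theorem \ref{sol12345}(2), with its trivial kernel hypothesis, settles this.
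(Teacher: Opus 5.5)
Your proposal is correct and follows the same route as the paper. You restrict the isomorphism $\varphi$ to the vertex set, use Theorem~\ref{sol12345} to see that $\varphi$ maps $\operatorname{sol}(L_1)$ onto $\operatorname{sol}(L_2)$, and use $\varphi(\langle x,y\rangle)=\langle\varphi(x),\varphi(y)\rangle$ to transfer solvability (and hence non-solvability) of each pair; you simply make explicit details the paper leaves implicit, such as $\varphi(0)=0$ and the preservation of the derived series.
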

\begin{proof}
(1) Denote the vertex set of $\mathfrak{S}(L_{i})$ by $V_{i}$ and the edge set of $\mathfrak{S}(L_{i})$ by $E_{i}$, where $i=1,2$.
Suppose that $\varphi$ is an isomorphism from $L_{1}$ to $L_{2}$. By Theorem \ref{sol12345}, for any $x\in L_{1}$, we have

$$x \in \operatorname{sol}(L_{1}) \Leftrightarrow \varphi(x) \in \operatorname{sol}(L_{2}).$$
Therefore, there is a bijection from $V_{1}$ to $V_{2}$.

Then for any two distinct vertices $x, y \in V_1$,  by Definition \ref{graphsol} and Theorem \ref{sol12345}, it concludes that
$$
\begin{aligned}
\{x,y\} \in E_{1} &\Leftrightarrow \langle x, y \rangle \text{ is solvable} \\
&\Leftrightarrow \langle \varphi(x), \varphi(y) \rangle \text{ is solvable } \\
&\Leftrightarrow \{\varphi(x),\varphi(y)\} \in E_{2}.
\end{aligned}
$$
Hence, $\varphi$ induces a graph isomorphism from $\mathfrak{S}(L_1)$ to $\mathfrak{S}(L_2)$.

(2) Similarly with (1), $\varphi$ also induces a graph isomorphism from $\mathfrak{S}(L_1)^c$ to $\mathfrak{S}(L_2)^c$.
\end{proof}

\begin{corollary}
The solvability measure $\nu(L)$ is an isomorphism invariant, that is, if $L_1 \cong L_2$ then $\nu(L_1)=\nu(L_2)$.
\end{corollary}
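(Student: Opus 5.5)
The plan is to read the corollary directly off Proposition \ref{invariants}(1). Let $\varphi: L_1 \to L_2$ be an isomorphism of Lie superalgebras. Since the measure $\nu$ is only defined for finite dimensional non-solvable Lie superalgebras over a field with $\operatorname{char}\mathbb{F}\geq 3$, we assume these hypotheses for $L_1$. They transfer to $L_2$: $\varphi$ is a linear bijection, so finite dimensionality is preserved. Also $\varphi(L_1^{(k)}) = L_2^{(k)}$ for all $k$, so $L_2$ is non-solvable exactly when $L_1$ is.

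Write $\mathfrak{S}(L_i) = (V_i, E_i)$. Proposition \ref{invariants}(1) shows that $\varphi$ restricts to a graph isomorphism $V_1 \to V_2$. This rests on two facts established there:
\begin{itemize}
\item by Theorem \ref{sol12345}(2), applied with $\ker\varphi = 0 \subseteq \operatorname{sol}(L_1)$, we get $\varphi(\operatorname{sol}(L_1)) = \operatorname{sol}(L_2)$;
\item $\varphi(0) = 0$.
\end{itemize}
Consequently $|V_1| = |V_2|$, and the map $\{x,y\} \mapsto \{\varphi(x),\varphi(y)\}$ is a bijection $E_1 \to E_2$, so $|E_1| = |E_2|$.

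Substituting into the formula gives
$$\nu(L_1) = 1 - \frac{2|E_1|}{|V_1|(|V_1|-1)} = 1 - \frac{2|E_2|}{|V_2|(|V_2|-1)} = \nu(L_2).$$
The denominator is nonzero because the definition already requires $|V| \geq 2$.

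The only subtle point is what $|V|$ and $|E|$ mean. Over an infinite field the vertex set is infinite, so the ratio only makes sense when $\mathbb{F}$ is finite (as in the example over $\mathbb{F}_3$). We therefore read the statement under the implicit assumption that $\mathbb{F}$ is finite, so that $V_i$ and $E_i$ are finite sets. Since both quantities are defined for $L_1$ and $L_2$ simultaneously under the same hypotheses, the argument needs no further work. Beyond this interpretive point, there is no real obstacle: everything follows from the graph isomorphism already constructed.
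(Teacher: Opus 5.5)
Your proposal is correct and follows the paper's proof: Proposition~\ref{invariants}(1) gives $\mathfrak{S}(L_1)\cong\mathfrak{S}(L_2)$, so $|V_1|=|V_2|$ and $|E_1|=|E_2|$, and substituting into the formula gives $\nu(L_1)=\nu(L_2)$. One correction to your aside: $\{0\}\subseteq\operatorname{sol}(L_1)$ can fail, because $0\in\operatorname{sol}(L_1)$ requires every $\langle z\rangle=\langle z,0\rangle$ to be solvable, which the paper's Remark says need not hold (its example even has $\operatorname{sol}(L)=\emptyset$); so justify $\varphi(\operatorname{sol}(L_1))=\operatorname{sol}(L_2)$ by applying Theorem~\ref{sol12345}(1) to both $\varphi$ and $\varphi^{-1}$, not via part (2).
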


\begin{proof}
By Theorem \ref{invariants}, the solvable graphs $\mathfrak{S}(L_1)$ and $\mathfrak{S}(L_2)$ are isomorphic.
Isomorphic graphs have the same number of vertices and edges, hence
$$\nu(L_{1})=1-2|E|/|V|(|V|-1)=\nu(L_{2}).$$
\end{proof}

\begin{proposition}\label{measure}
Let $L$ be a finite dimensional non-solvable Lie superalgebra with $\operatorname{char}\mathbb{F} \geq 3$.
Then the following properties hold:
\begin{enumerate}[label=(\arabic*)]
    \item $0 \leq \nu(L) \leq 1$.
    \item $\nu(L) = 0$ if and only if $\langle x,y \rangle$ is solvable for all $ x,y\in V$.
    \item $\nu(L) = 1$ if and only if $\langle x, y\rangle$ is non-solvable for every pair of distinct vertices $x,y\in V$.
\end{enumerate}
\end{proposition}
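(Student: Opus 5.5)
The plan is to reduce all three statements to elementary counting on the finite simple graph $\mathfrak{S}(L)=(V,E)$. Since $\operatorname{char}\mathbb{F}\geq 3$ is assumed in the definition of $\nu(L)$, I will take $\mathbb{F}$ to be finite, as the example over $\mathbb{F}_3$ suggests, so that $V$ is a finite set. I also use the remark after the definition that $|V|\geq 2$. The whole argument rests on one fact: every edge of $\mathfrak{S}(L)$ is an unordered pair of distinct vertices. Hence $E$ is a subset of the set of $2$-element subsets of $V$, and
\[
0\leq |E|\leq \binom{|V|}{2}=\frac{|V|(|V|-1)}{2}.
\]

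For (1), I divide this inequality by the positive number $|V|(|V|-1)/2$. This gives $0\leq 2|E|/(|V|(|V|-1))\leq 1$, and subtracting from $1$ yields $0\leq \nu(L)\leq 1$.

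For (2), $\nu(L)=0$ holds if and only if $|E|=\binom{|V|}{2}$. Because $E$ is contained in the set of all $2$-subsets of $V$, this equality holds if and only if every pair of distinct vertices is an edge. By Definition \ref{graphsol}, that means $\langle x,y\rangle$ is solvable for all distinct $x,y\in V$. The statement quantifies over all $x,y\in V$, so I must also handle the case $x=y$, where $\langle x,x\rangle=\langle x\rangle$. This is the one step I expect to need care, because by the Remark after the first definition, $\langle x\rangle$ need not be solvable in a Lie superalgebra. I would try to close the gap as follows. If $\langle x,y\rangle$ is solvable for some $y\neq x$, then $\langle x\rangle\subseteq\langle x,y\rangle$ is a subalgebra of a solvable superalgebra, hence solvable. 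Any vertex $x$ has such a partner $y$ whenever $|V|\geq 2$, which the definition guarantees. So the diagonal condition is automatic once the off-diagonal one holds, and the two formulations are equivalent. The converse direction needs no extra work.

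For (3), $\nu(L)=1$ holds if and only if $|E|=0$. This means no pair of distinct vertices is adjacent, which by Definition \ref{graphsol} is exactly the statement that $\langle x,y\rangle$ is non-solvable for every pair of distinct $x,y\in V$. Equivalently, $\mathfrak{S}(L)^{c}$ is complete.

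The only genuine obstacle is the bookkeeping in (2) about the diagonal pairs $x=y$, handled as above. The rest is direct substitution into the formula for $\nu(L)$.
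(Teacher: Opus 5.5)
Your proposal is correct and follows the paper's proof: bound $0\le |E|\le\binom{|V|}{2}$ and read off $\nu(L)=0$ and $\nu(L)=1$ as the two extreme cases $|E|=\binom{|V|}{2}$ and $|E|=0$. You also handle the diagonal case $x=y$ in (2) via $\langle x\rangle\subseteq\langle x,y\rangle$ and $|V|\ge 2$, which is valid and closes a point the paper skips, since it tacitly reads ``for all $x,y\in V$'' as ranging over distinct pairs.
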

\begin{proof}
(1) Suppose that $\mathfrak{S}(L)= (V,E)$ is the solvable graph of $L$.
Since $E$ is a subset of the set of all unordered pairs from $V$, we have $0 \leq |E| \leq \binom{|V|}{2} = \frac{|V|(|V| - 1)}{2}$. Therefore,
$$0 \leq \nu(L)=1-\frac{2 |E|}{|V|(|V| - 1)} \leq 1.$$

(2) Assume that for every pair of distinct elements $x$ and $y$ in $V$, the subalgebra $\langle x, y\rangle$ is solvable,
then $$|E| = \binom{|V|}{2} = \frac{|V|(|V| - 1)}{2}.$$ Therefore,
$$\nu(L) = 1 - \frac{2 \cdot \frac{|V|(|V| - 1)}{2}}{|V|(|V| - 1)} = 0.$$

Conversely, the $\nu(L) = 0$ implies $|E| = \frac{|V|(|V| - 1)}{2}$.
This means that $E$ contains all edges from $\mathfrak{S}$.
Hence, for every pair of distinct elements $x$ and $y$ in $V$, the subalgebra $\langle x, y\rangle$ is solvable.

(3)$\nu(L) = 1$ if and only if $|E|=0$, which holds if and only if $\langle x,y \rangle$ is non-solvable for all $ x,y\in V$.
\end{proof}

\begin{definition}
Suppose that $L$ is a finite dimensional non-solvable Lie superalgebra and $x,y\in L$. Define the map $I_{L}: L\times L\rightarrow \mathbb{Z}_{2}$ by
\[
I_{L}(x, y) = \begin{cases}
1 & \langle x, y \rangle \text{ is solvable}, \\
0 & \text{otherwise}.
\end{cases}
\]
\end{definition}

\begin{lemma}\label{leqwer}
Suppose that $L_1$ and $L_2$ be finite dimensional non-solvable Lie superalgebras and $L=L_1\oplus L_2$. Then for any $u = (u_1, u_2), v = (v_1, v_2) \in L$,
we have
\begin{equation}\label{sol3.1}
I_{L}(u, v) = I_{L_1}(u_1, v_1) \cdot I_{L_2}(u_2, v_2).
\end{equation}
\end{lemma}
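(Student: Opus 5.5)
The identity follows from the structure of the subalgebra $\langle u,v\rangle$ inside the direct sum, as in the proof of Theorem \ref{sol direct sum}. Since $I_L$, $I_{L_1}$, $I_{L_2}$ take only the values $0$ and $1$, and a product of such values is $1$ exactly when both factors are $1$, \eqref{sol3.1} is equivalent to
$$\langle u,v\rangle \text{ is solvable} \iff \langle u_1,v_1\rangle_{L_1} \text{ and } \langle u_2,v_2\rangle_{L_2} \text{ are both solvable}.$$
So the whole task is to prove this equivalence.

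Let $\pi_i:L\to L_i$ be the canonical projections. These are surjective homomorphisms of Lie superalgebras, because $[L_1,L_2]=0$ and the grading is componentwise. Since a homomorphism maps the subalgebra generated by a set onto the subalgebra generated by its image, we get $\pi_i(\langle u,v\rangle)=\langle u_i,v_i\rangle_{L_i}$.

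For the forward direction, suppose $\langle u,v\rangle$ is solvable. Solvability passes to homomorphic images, as already used in Proposition (9) and Theorem \ref{sol12345}, so each $\langle u_i,v_i\rangle_{L_i}$ is solvable.

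For the converse, $\langle u,v\rangle$ is contained in $\langle u_1,v_1\rangle_{L_1}\oplus\langle u_2,v_2\rangle_{L_2}$, since the latter is a subalgebra containing $u$ and $v$. The derived series of a direct sum is computed componentwise: $(A\oplus B)^{(k)}=A^{(k)}\oplus B^{(k)}$, by induction on $k$ using $[A,B]=0$. If the two factors are solvable of derived lengths $r_1$ and $r_2$, the sum has derived series vanishing at step $\max(r_1,r_2)$. Subalgebras of solvable superalgebras are solvable, because $H^{(k)}\subseteq K^{(k)}$ whenever $H\subseteq K$. Hence $\langle u,v\rangle$ is solvable.

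Combining the two directions gives the equivalence, and therefore \eqref{sol3.1}.

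The main point needing care is that $\langle u,v\rangle$ need not equal the full direct sum $\langle u_1,v_1\rangle\oplus\langle u_2,v_2\rangle$, contrary to what the proof of Theorem \ref{sol direct sum} asserts. The argument above sidesteps this: it uses only containment for the converse and surjective projections for the forward direction. A further subtlety is that the generated subalgebra should be taken as the graded one, with $u$ and $v$ possibly inhomogeneous. This causes no trouble, because the projections respect the grading and the sum of graded subalgebras is graded.
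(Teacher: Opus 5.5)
Your proof is correct. It makes the same reduction as the paper: the identity is equivalent to ``$\langle u,v\rangle$ is solvable iff $\langle u_1,v_1\rangle_{L_1}$ and $\langle u_2,v_2\rangle_{L_2}$ are both solvable.'' Where you differ is in how you justify that equivalence.

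The paper just says the equivalence holds ``by definition''. Implicitly it relies on the proof of Theorem \ref{sol direct sum}, which asserts $\langle u,v\rangle=\langle u_1,v_1\rangle_{L_1}\oplus\langle u_2,v_2\rangle_{L_2}$ and reads solvability off a componentwise derived series. As you point out, that equality is false in general. Take $L_1=L_2=\mathfrak{sl}_2$ with zero odd part, $u=(e,e)$ and $v=(f,f)$. These generate only the diagonal copy of $\mathfrak{sl}_2$, which is $3$-dimensional and perfect (characteristic $\neq 2$). So neither the subalgebras nor their derived series agree with the claimed direct sum.

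Your argument needs only that $\langle u,v\rangle$ is a subdirect product: the projections map it onto each $\langle u_i,v_i\rangle_{L_i}$, and it sits inside their direct sum. That sandwich is enough, and it is actually valid. It even shows that the derived length of $\langle u,v\rangle$ is the maximum of the derived lengths of its two projections.

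The same argument also repairs the proof of Theorem \ref{sol direct sum}. Both of that theorem's conclusions depend only on this solvability equivalence, not on the incorrect decomposition of $\langle u,v\rangle$.
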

\begin{proof}
By definition, $\langle u, v \rangle$
is solvable if and only if both $\langle u_1, v_1 \rangle$ and $\langle u_2, v_2 \rangle$ are solvable.
Hence Equation \ref{sol3.1} is true.
\end{proof}

Let $L_1$ and $L_2$ be finite dimensional non-solvable Lie superalgebras.
Set $A_i = L_i \setminus (\operatorname{sol}(L_i)\cup \{0\})$ and $a_i = |A_i|$, $B_i = \operatorname{sol}(L_i)$ and $b_i = |B_i|$.
Define $\alpha_i = 1 - \nu(L_i)$, $\omega_{i}=\{ x \in A_i \mid \langle x \rangle \text{ is solvable} \}$
and $\nu_i = \dfrac{ |\omega_{i}| }{a_i}$, where $i=1,2.$ Next, with reference to \cite{ZT}, we can derive the solvablity measure formula for the
direct sum of Lie superalgebras.

\begin{proposition}
Let $L_1$ and $L_2$ be finite dimensional non-solvable Lie superalgebras with $\operatorname{char}\mathbb{F} \geq 3$.
Then the  solvability measure of $L = L_1 \oplus L_2$ is
\[
\nu(L) = 1-\frac{2Q}{|V|(|V|-1)},
\]
where
\begin{align*}
|V| &= a_1 a_2 + a_1 b_2 + b_1 a_2, \\
Q &=Q_{11}+Q_{22}+Q_{33}+Q_{12}+Q_{13}+Q_{23}\\
&=\frac{1}{2} a_1 a_2 \Bigl( \alpha_1 \alpha_2 (a_1-1)(a_2-1) + \nu_1 \alpha_2 (a_2-1) + \alpha_1 \nu_2 (a_1-1) \Bigr) \\
+& \frac{1}{2}a_1 b_2 \bigl( \alpha_1 b_2 (a_1-1) + \nu_1 (b_2-1) \bigr) \\
+& \frac{1}{2}a_2 b_1 \bigl( \alpha_2 b_1 (a_2-1) + \nu_2 (b_1-1) \bigr) \\
+& a_1 a_2 b_2 \bigl( \alpha_1 (a_1-1) + \nu_1 \bigr) \\
+& a_1 a_2 b_1 \bigl( \alpha_2 (a_2-1) + \nu_2 \bigr) \\
+& a_1 a_2 b_1 b_2.
\end{align*}
In particular, If $L_2$ has no solvable subalgebras, then
$\nu(L) = 1$.
\end{proposition}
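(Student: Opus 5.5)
Our plan is to count vertices and edges of $\mathfrak{S}(L)$ directly, using Theorem \ref{sol direct sum} and Lemma \ref{leqwer}. Since all sets are finite, $\mathbb{F}$ is implicitly finite. By Theorem \ref{sol direct sum}(1), $\operatorname{sol}(L)=B_1\oplus B_2$. So $u=(u_1,u_2)$ is a vertex exactly when $u_1\in A_1$ or $u_2\in A_2$, where $A_i$ now contains $0$'s complement appropriately. Here we treat $L_i\setminus B_i$ as $A_i$, and we read $0\in B_i$ so that the zero vector is handled by the $b_i$ terms.

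This splits $V$ into three disjoint blocks,
$$V=(A_1\times A_2)\ \sqcup\ (A_1\times B_2)\ \sqcup\ (B_1\times A_2),$$
which gives $|V|=a_1a_2+a_1b_2+b_1a_2$. We then write $Q=|E|$ as the sum of six pieces: $Q_{11},Q_{22},Q_{33}$ count edges inside each block, and $Q_{12},Q_{13},Q_{23}$ count edges between blocks. By Lemma \ref{leqwer}, $u\sim v$ if and only if $I_{L_1}(u_1,v_1)=I_{L_2}(u_2,v_2)=1$. Whenever one coordinate lies in $B_i$, the factor $I_{L_i}$ equals $1$, because $B_i=\operatorname{sol}(L_i)$. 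So each block count factors into a product of counts in $L_1$ and in $L_2$.

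For these factor counts we use the following data in $L_i$:
\begin{itemize}
\item the number of ordered pairs of distinct elements of $A_i$ with solvable span is $\alpha_i a_i(a_i-1)$, since $2|E(\mathfrak{S}(L_i))|=\alpha_i a_i(a_i-1)$;
\item the number of diagonal pairs $(x,x)$ with $\langle x\rangle$ solvable is $|\omega_i|=\nu_i a_i$.
\end{itemize}
For example, $Q_{11}$ counts unordered pairs $u\neq v$ in $A_1\times A_2$. We count ordered pairs with $u_1\ne v_1$ or $u_2\neq v_2$ and split into three cases:
\begin{itemize}
\item both coordinates differ: $\alpha_1a_1(a_1-1)\,\alpha_2a_2(a_2-1)$;
\item $u_1=v_1$ only: $\nu_1a_1\,\alpha_2a_2(a_2-1)$;
\item $u_2=v_2$ only: symmetric to the previous case.
\end{itemize}
Halving the total gives the displayed $Q_{11}$.

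The same procedure handles the other pieces. For $Q_{22}$, the $B_2$ coordinate always contributes $1$, and we split on whether $u_1=v_1$. This gives $\tfrac12 a_1b_2(\alpha_1b_2(a_1-1)+\nu_1(b_2-1))$. $Q_{33}$ is symmetric. For the cross terms between blocks, $u\ne v$ holds automatically. For $Q_{12}$ we count $(u,v)\in (A_1\times A_2)\times(A_1\times B_2)$; the second coordinate contributes $a_2b_2$, and the first contributes $a_1(\alpha_1(a_1-1)+\nu_1)$. $Q_{13}$ is symmetric. For $Q_{23}$ every pair is an edge, giving $a_1b_1a_2b_2$. Substituting into $\nu(L)=1-2|E|/(|V|(|V|-1))$ gives the formula.

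The main obstacle is the bookkeeping of diagonal terms and the zero element. We must check that the conventions behind $\omega_i$ and $B_i$ match the paper's expression exactly, since mismatches show up as shifts like $b_2$ versus $b_2-1$. We will verify these conventions term by term before assembling $Q$.

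For the final claim, assume $L_2$ has no solvable subalgebras. Then every $\langle u_2,v_2\rangle$ is non-solvable, so $I_{L_2}\equiv 0$ and $B_2=\emptyset$. By Lemma \ref{leqwer}, $E=\emptyset$. Proposition \ref{measure}(3) then gives $\nu(L)=1$. This uses the same reading of the hypothesis, noting that $0$ forces $\langle 0\rangle$ to be trivial, so we interpret the condition as applying to nonzero elements.
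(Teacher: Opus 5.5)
Your six-block count is the natural argument; the paper states this proposition without proof, deferring to earlier work, but its labels $Q_{11},\dots,Q_{23}$ indicate exactly this split. Every individual count is right: from $2|E(\mathfrak{S}(L_i))|=\alpha_i a_i(a_i-1)$ and $|\omega_i|=\nu_i a_i$ you recover each displayed $Q_{ij}$.

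\textbf{The gap: the zero element.} The genuine gap is the step you flag and then leave open, namely that you simply ``read $0\in B_i$''. This is not a convention you may adopt. It is a property that must be proved, and it can fail.

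What is true is that $\operatorname{sol}(L_i)\neq\emptyset$ implies $0\in\operatorname{sol}(L_i)$. Indeed, if $x\in\operatorname{sol}(L_i)$, then for every $z$ the subalgebra $\langle z,0\rangle=\langle z\rangle$ lies in the solvable $\langle z,x\rangle$. Hence $0\in B_i$ iff $B_i\neq\emptyset$ iff every $\langle z\rangle\subseteq L_i$ is solvable.

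In a superalgebra this can fail, because a non-homogeneous $z$ may generate a non-solvable graded subalgebra. The paper's own three-dimensional example over $\mathbb{F}_3$ is such a case: $\langle h+x+y\rangle$ is the whole perfect algebra, and the text records that its solvabilizer is empty.

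When some $B_i=\emptyset$, the vertices $(u_1,0)$ or $(0,u_2)$ of $\mathfrak{S}(L)$ lie in none of your three blocks, and the stated formula is actually false. Take $L_1=L_2$ equal to that example. Then $\operatorname{sol}(L)=\emptyset$ gives $|V|=27^2-1=728$, while $a_1a_2+a_1b_2+b_1a_2=26\cdot 26=676$.

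Nor can you force $0$ into $B_i$ by fiat. That repairs the vertex count but breaks your key step ``$I_{L_i}=1$ whenever a coordinate lies in $B_i$'', since $I_{L_i}(0,y)=1$ only when $\langle y\rangle$ is solvable. So your argument proves the formula exactly under the extra hypothesis $\operatorname{sol}(L_1)\neq\emptyset$ and $\operatorname{sol}(L_2)\neq\emptyset$. You should state this hypothesis explicitly, together with the lemma above, rather than defer it.

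\textbf{The final claim.} The hypothesis can never hold. We have $\langle 0\rangle=0$, and for nonzero homogeneous $x$ one has $\langle x\rangle\subseteq\operatorname{span}\{x,[x,x]\}$ (because $[x,[x,x]]=0$), which is solvable. So the claim is vacuous, and restricting to nonzero elements does not make it meaningful.

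Your derivation is also internally inconsistent. The assertion $I_{L_2}\equiv 0$ contradicts $I_{L_2}(0,0)=1$. Moreover, under your ``nonzero'' reading $\operatorname{sol}(L)=\emptyset$, so pairs $(u_1,0),(v_1,0)$ are vertices that may be adjacent. Thus $E=\emptyset$ would not follow from the product formula for $I_L$.
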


Let $\mathfrak{S}(L)=(V_L, E_L)$ and $\mathfrak{S}(M)=(V_M, E_M)$ be two solvable graphs. A map $f_{\mathfrak{S}}: \mathfrak{S}(L) \to \mathfrak{S}(M)$ is called \emph{an admissible graph homomorphism} if:
\begin{enumerate}[label=(\arabic*)]
\item $\{x, y\}\in E_L$ if and only if $\{f_{\mathfrak{S}}(x), f_{\mathfrak{S}}(y)\}\in E_M$,
\item There is a surjective homomorphism $\varphi: L \to M$ of Lie superalgebras such that $f_{\mathfrak{S}}$ is the restriction of $\varphi$ to $V_L$.
\end{enumerate}

We define the solvable graph category $\mathcal{G}$, whose objects are all solvable graphs of finite dimensional Lie superalgebras and whose morphisms are all admissible graph homomorphisms.

\begin{proposition}
Define a map $\Gamma:\mathcal{L}\to\mathcal{G}$ by specifying its action:
\begin{itemize}
    \item \textbf{On objects}: For any finite dimensional Lie superalgebra $L\in\mathcal{L}$, define $\Gamma(L)=\mathfrak{S}(L)$;
    \item \textbf{On morphisms}: For any morphism $f:L\to M$ in $\mathcal{L}$, define $\Gamma(f)=f_{V_{L}}:\mathfrak{S}(L) \to \mathfrak{S}(M)$.
\end{itemize}

Then $\Gamma$ is a functor.
\end{proposition}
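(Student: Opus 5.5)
The plan is to verify the two ingredients of functoriality separately: first that $\Gamma(f)$ really is a morphism of $\mathcal{G}$, i.e.\ an admissible graph homomorphism $\mathfrak{S}(L)\to\mathfrak{S}(M)$; then that $\Gamma$ preserves identities and composition. The second part is formal. Since $\Gamma(f)$ is literally the restriction $f|_{V_L}$, we have $\Gamma(\operatorname{id}_L)=\operatorname{id}_L|_{V_L}=\operatorname{id}_{\mathfrak{S}(L)}$. For $f:L\to M$ and $g:M\to N$ we have $\Gamma(g\circ f)=(g\circ f)|_{V_L}=g|_{V_M}\circ f|_{V_L}$, provided $f(V_L)\subseteq V_M$. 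This mirrors exactly the argument for the functor $\mathcal{S}$ in the preceding proposition.

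For admissibility, condition (2) in the definition of admissible graph homomorphism holds by construction: $f$ is a surjective homomorphism of Lie superalgebras and $\Gamma(f)$ is its restriction to $V_L$. For condition (1), the forward implication is the homomorphic-image argument already used in Theorem \ref{sol12345}(1): if $\langle x,y\rangle$ is solvable then $\langle f(x),f(y)\rangle=f(\langle x,y\rangle)$ is solvable. For the reverse implication, I would repeat the argument of Theorem \ref{sol12345}(2). If $f(\langle x,y\rangle)$ is solvable, put $K=\ker f\cap\langle x,y\rangle$; then $\langle x,y\rangle/K$ is solvable, and solvability of $K$ (coming from $\ker f\subseteq\operatorname{sol}(L)$) gives solvability of $\langle x,y\rangle$.

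The main obstacle is well-definedness on vertices: showing $f(V_L)\subseteq V_M$, i.e.\ that $x\notin\operatorname{sol}(L)\cup\{0\}$ forces $f(x)\notin\operatorname{sol}(M)\cup\{0\}$. Theorem \ref{sol12345}(1) only gives the inclusion $f(\operatorname{sol}(L))\subseteq\operatorname{sol}(M)$, which points the wrong way. To get the needed direction I would use Theorem \ref{sol12345}(2) under the hypothesis $\ker f\subseteq\operatorname{sol}(L)$. Then $f^{-1}(\operatorname{sol}(M))=\operatorname{sol}(L)+\ker f=\operatorname{sol}(L)$, so a non-solvabilizer element maps outside $\operatorname{sol}(M)$. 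Also $f(x)\neq 0$, since otherwise $x\in\ker f\subseteq\operatorname{sol}(L)$.

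This same kernel condition is exactly what the reverse edge implication above requires. I would therefore make it explicit, either by restricting the morphisms of $\mathcal{L}$ used here to surjections with $\ker f\subseteq\operatorname{sol}(L)$, or by reading the statement with this standing assumption, as in Theorem \ref{sol12345}(2) and the main results announced in the introduction. Without it, an element outside $\operatorname{sol}(L)$ could be sent to $0$ or into $\operatorname{sol}(M)$, and $f|_{V_L}$ would not land in $V_M$. Once this point is settled, composition is also well-defined, because the class of such morphisms is closed under composition. Indeed, if $x\in\ker(g\circ f)$ then $f(x)\in\ker g\subseteq\operatorname{sol}(M)$, so $x\in f^{-1}(\operatorname{sol}(M))=\operatorname{sol}(L)$. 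This completes the verification that $\Gamma$ is a functor.
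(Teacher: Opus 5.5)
Your check of identities and composition is all the paper's proof contains. The paper only writes $\Gamma(\varphi\circ\phi)=\varphi_{V_M}\circ\phi_{V_L}$ and never verifies that $\Gamma(f)$ is a morphism of $\mathcal{G}$. You are right that this is where the content lies, and the statement is in fact false for arbitrary surjections. Take the first projection $\pi\colon\mathfrak{sl}_2\oplus\mathfrak{sl}_2\to\mathfrak{sl}_2$ (zero odd parts). Then $(0,e)\in V_L$ because $\langle(0,e),(0,f)\rangle\cong\mathfrak{sl}_2$, yet $\pi(0,e)=0\notin V_M$. So a hypothesis like $\ker f\subseteq\operatorname{sol}(L)$ is needed. (A minor point: $\operatorname{sol}(L)+\ker f=\operatorname{sol}(L)$ does not follow from $\ker f\subseteq\operatorname{sol}(L)$ alone, since nothing shows $\operatorname{sol}(L)$ is closed under addition. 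Get $f^{-1}(\operatorname{sol}(M))\subseteq\operatorname{sol}(L)$ directly from the argument of Theorem \ref{sol12345}(2), as you already do for edges.)

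Your repair still has two genuine gaps.

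\textbf{Identities.} The surjections with $\ker f\subseteq\operatorname{sol}(L)$ need not include $\operatorname{id}_L$. Indeed $0\in\operatorname{sol}(L)$ if and only if $\langle z\rangle$ is solvable for every $z$, and this can fail. In $\mathfrak{osp}(1|2)$, with $y$ the odd vector of weight $-1$, the subsuperalgebra generated by $e+y$ contains $e$ and $[y,y]\in\mathbb{F}^{\times}f$, hence $\mathfrak{sl}_2$. So $0\notin\operatorname{sol}(L)$, and your restricted class is not a category. Use $\ker f\subseteq\operatorname{sol}(L)\cup\{0\}$ instead, i.e.\ admit isomorphisms as well; these are handled by Proposition \ref{invariants}.

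\textbf{Pairs in one fibre.} Your verification of condition (1) overlooks these. Suppose $0\neq k\in\ker f$ and $x\in V_L$, and set $y=x+k$. Then $y$ is a vertex (as $f(y)=f(x)\in V_M$) distinct from $x$. Moreover $\langle x,y\rangle=\langle x,k\rangle$ is solvable because $k\in\operatorname{sol}(L)$, so $\{x,y\}\in E_L$. But $f(x)=f(y)$, so $\{f(x),f(y)\}\notin E_M$. For instance, for the projection $\mathfrak{sl}_2\oplus\mathbb{F}\to\mathfrak{sl}_2$ with $\mathbb{F}$ abelian, the adjacent vertices $(e,0)$ and $(e,1)$ have the same image. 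Hence, even under your hypothesis, $\Gamma(f)$ is admissible only if (1) is read for pairs with $f(x)\neq f(y)$. That is how the corollary to Theorem \ref{let} implicitly uses it; read literally, (1) forces $\ker f=0$ for every map in your class.

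\textbf{Objects.} $\mathfrak{S}(L)$ is defined only for non-solvable $L$, so the objects of $\mathcal{L}$ must be restricted too. This is compatible with your morphisms: $\ker f$ is solvable (as used in Theorem \ref{sol12345}(2)) and $L/\ker f\cong M$.
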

\begin{proof}
Obviously, $\Gamma$ preserves identity morphisms. Let $\phi:L\rightarrow M$ and $\varphi:M\rightarrow N$ be morphisms in$\mathcal{L}$. Then
$$\Gamma(\varphi\circ\phi)=(\varphi\circ\phi)_{V_{L}}=\varphi_{V_{M}}\circ\phi_{V_{L}}= \Gamma(\varphi)\circ \Gamma(\phi).$$
Thus $\Gamma$ is a well-defined functor.
\end{proof}

\begin{theorem}\label{let}
Let $L_1$ and $L_2$ be finite dimensional non-solvable Lie superalgebras with $\text{char}\ \mathbb{F} \geq 3$.
Let $\varphi: L_1 \to L_2$ be a surjective homomorphism such that $\ker\varphi \subseteq \operatorname{sol}(L_1)$. Then
\[
\nu(L_2) \geq \nu(L_1),
\]
with equality if and only if $\varphi$ is an isomorphism of Lie superalgebras.
\end{theorem}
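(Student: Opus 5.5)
The plan is to show that, under the hypothesis $\ker\varphi\subseteq\operatorname{sol}(L_1)$, the solvable graph $\mathfrak{S}(L_1)$ is a ``blow-up'' of $\mathfrak{S}(L_2)$ along the fibres of $\varphi$. Then $\nu(L_1)$ and $\nu(L_2)$ can be compared by direct counting. Throughout, $\mathbb{F}$ is taken finite, as is implicit in the definition of $\nu$. Write $K=\ker\varphi$ and $k=|K|$, let $\mathfrak{S}(L_i)=(V_i,E_i)$, put $n=|V_2|$, and let $s=|\{v\in V_2 \mid \langle v\rangle \text{ solvable}\}|$.

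\textbf{Step 1 (vertices).} I will reuse the argument in the proof of Theorem \ref{sol12345}(2). If $x\in L_1$ and $\langle\varphi(x),\varphi(z)\rangle$ is solvable, then $\langle x,z\rangle$ is solvable, because $\langle x,z\rangle/(K\cap\langle x,z\rangle)$ is solvable and $K\cap\langle x,z\rangle$ is solvable. It follows that $\varphi^{-1}(\operatorname{sol}(L_2))=\operatorname{sol}(L_1)$. Since $\varphi^{-1}(0)=K\subseteq\operatorname{sol}(L_1)$, we get $V_1=\varphi^{-1}(V_2)$, and every fibre over a vertex has exactly $k$ elements. Hence $|V_1|=kn$.

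\textbf{Step 2 (edges).} The same argument gives, for $x,y\in V_1$, that $\langle x,y\rangle$ is solvable if and only if $\langle\varphi(x),\varphi(y)\rangle$ is solvable.
\begin{itemize}
\item For $x,y$ in different fibres, this means $\{x,y\}\in E_1$ if and only if $\{\varphi(x),\varphi(y)\}\in E_2$.
\item For $x\neq y$ in the same fibre over $v$, we have $\langle\varphi(x),\varphi(y)\rangle=\langle v\rangle$. So each fibre is a clique when $\langle v\rangle$ is solvable, and an independent set otherwise.
\end{itemize}
Counting edges then gives
\[
2|E_1| = 2k^2|E_2| + s\,k(k-1).
\]

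\textbf{Step 3 (comparison).} The inequality $\nu(L_2)\ge\nu(L_1)$ is equivalent to
\[
\frac{2k|E_2|+s(k-1)}{n(kn-1)} \;\ge\; \frac{2|E_2|}{n(n-1)}.
\]
Clearing denominators, this reduces to $(k-1)\bigl(s(n-1)-2|E_2|\bigr)\ge 0$. The key observation is that every endpoint of an edge generates a solvable subalgebra, since $\langle v\rangle\subseteq\langle v,w\rangle$. Therefore the edges of $\mathfrak{S}(L_2)$ lie among the $s$ such vertices, so $2|E_2|\le s(s-1)\le s(n-1)$, which proves the inequality.

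\textbf{Step 4 (equality).} Equality holds exactly when $k=1$ or $2|E_2|=s(n-1)$. The case $k=1$ means $\varphi$ is injective, hence an isomorphism, and conversely an isomorphism gives equality by the invariance corollary above.

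\textbf{Main obstacle.} The hard part is ruling out the degenerate equality case $2|E_2|=s(n-1)$ with $k>1$. This case occurs precisely when either $s=0$ (so $E_2=\emptyset$ and $\nu=1$ on both sides), or $s=n$ and $\mathfrak{S}(L_2)$ is complete (so $\nu=0$ on both sides). I would first try to show that, for a non-solvable $L_2$, neither extreme can occur. For completeness, note that if all pairs of vertices, and all vertices together with elements of $\operatorname{sol}(L_2)$, generate solvable subalgebras, then every element lies in $\operatorname{sol}(L_2)$, which is a contradiction. Controlling pairs where one element is $0$ might need care. For the case $s=0$, I would look for a nonzero element generating a solvable subalgebra, for instance an even element, since $\langle h\rangle=\mathbb{F}h$. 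If this fails in general, the equality statement needs the extra hypothesis that $\mathfrak{S}(L_2)$ is neither empty nor complete, and I would record that explicitly.

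Two further points also need checking: that $K$, which lies in $\operatorname{sol}(L_1)$, is indeed solvable, as used in Theorem \ref{sol12345}; and the identity $\langle x,x+c\rangle=\langle x,c\rangle$ for $c\in K$, which is used implicitly when handling pairs in the same fibre.
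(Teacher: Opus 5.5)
Your Steps 1--3 are correct. The reduction of the inequality to $(k-1)\bigl(s(n-1)-2|E_2|\bigr)\ge 0$, together with $2|E_2|\le s(s-1)$, justifies a step the paper's displayed chain of inequalities leaves implicit. The gap is in Step 4. You leave the ``only if'' direction of the equality statement open and suggest an extra hypothesis may be needed. It is not needed.

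The missing idea is the one the paper uses to count edges inside a fibre. If $k\ge 2$, pick $0\neq c\in\ker\varphi\subseteq\operatorname{sol}(L_1)$. Then $\langle x,c\rangle$ is solvable for every $x\in L_1$, so $\langle x\rangle\subseteq\langle x,c\rangle$ is solvable, and hence $\langle\varphi(x)\rangle=\varphi(\langle x\rangle)$ is solvable. So every element of $L_2$ generates a solvable subalgebra, which means $s=n$ whenever $k\ge 2$. The paper phrases this as: for $x,y$ in one fibre, $\langle x,y\rangle=\langle x,x-y\rangle$ is solvable, so every fibre is a clique. Your ``independent set'' case therefore never occurs when $k\ge 2$, and the degenerate case $s=0$ is ruled out immediately.

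This also settles the other degenerate case, $s=n$ with $\mathfrak{S}(L_2)$ complete, along the lines you sketched. Take a vertex $v$ and any $z\in L_2$; then $\langle v,z\rangle$ is solvable in every case:
\begin{itemize}
\item if $z$ is another vertex, by completeness;
\item if $z\in\operatorname{sol}(L_2)$, by the definition of $\operatorname{sol}$;
\item if $z\in\{0,v\}$, because $\langle v,z\rangle=\langle v\rangle$, which is solvable since $s=n$.
\end{itemize}
Hence $v\in\operatorname{sol}(L_2)$, contradicting $v\in V_2\neq\emptyset$; nonemptiness is presupposed by the definition of $\nu$. So for $k\ge 2$ you get $2|E_2|<n(n-1)=s(n-1)$, the inequality is strict, and equality holds if and only if $k=1$.

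The paper's own ``obviously $\nu(L_1)=\nu(L_2)$ iff $k=1$'' also tacitly needs $\mathfrak{S}(L_2)$ to be non-complete. With this argument added, your proof is complete and somewhat more careful than the published one.
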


\begin{proof}
By Theorem \ref{sol12345}, since $\ker\varphi \subseteq \operatorname{sol}(L_1)$ and $\varphi$ is surjective, we have
\[
\varphi(\operatorname{sol}(L_1)) = \operatorname{sol}(\varphi(L_1)) = \operatorname{sol}(L_2).
\]
By Definition \ref{graphsol}, $\varphi$ induces a surjective map
\[
\tilde{\varphi}: V(L_1) \to V(L_2).
\]
For any $u \in V(L_2)$, we have $|\varphi^{-1}(u)|=|\ker\varphi|$. Suppose that $|\ker\varphi|=k$, it follows that
\[
|V(L_1)| = k \cdot |V(L_2)|.
\]
By the proof of Theorem \ref{sol12345}, for any $x,y \in L_1$,
the subalgebra $\langle \varphi(x), \varphi(y) \rangle$ is solvable if and only if $\langle x,y \rangle$ is solvable.
We decompose the edge set $E(L_1)$ into two parts:
\begin{enumerate}[label=(\arabic*)]
    \item For distinct $u,v \in V(L_2)$, $\{u,v\} \in E(L_2)$ if and only if $\{x,y\} \in E(L_1)$
    for all $x \in \tilde{\varphi}^{-1}(u)$ and $y \in \tilde{\varphi}^{-1}(v)$.
    In this case, there is $k^2 |E_{L_2}|$ edges in $E_{L_1}$.
    \item For any $x,y \in \tilde{\varphi}^{-1}(u)$, we have $x-y \in \ker\varphi \subseteq sol(L_1)$.
    By the definition of the solvabilizer, $\langle x, x-y \rangle = \langle x,y \rangle$ is solvable.
    In this case, there is $|V(L_2)| \cdot \frac{k(k-1)}{2}$ edges in $E_{L_1}$.
\end{enumerate}
The total number of edges of \(\mathfrak{S}(L_1)\) is
\[
|E(L_1)| = k^2 |E(L_2)| + \frac{k(k-1)|V(L_2)|}{2}.
\]
Thus
\[
\begin{aligned}
\nu(L_1) &= 1 - \frac{2\left( k^2 |E(L_2)| + \frac{k(k-1)|V(L_2)|}{2} \right)}{k |V(L_2)| \cdot (k |V(L_2)| - 1)} \\
&= 1 - \frac{2k |E(L_2)| + (k-1)|V(L_2)|}{|V(L_2)| \cdot (k |V(L_2)| - 1)}\\
&= 1 - \frac{2k |E(L_2)|}{|V(L_2)| \cdot (k |V(L_2)| - 1)}- \frac{(k-1)|V(L_2)|}{|V(L_2)| \cdot (k |V(L_2)| - 1)}\\
&\leq 1 - \frac{2k|E(L_2)|}{|V(L_2)|(k|V(L_2)| - k)}\\
&= 1 - \frac{2|E(L_2)|}{|V(L_2)|(|V(L_2)| - 1)}\\
&= \nu(L_2).
\end{aligned}
\]
Obviously, $\nu(L_1)=\nu(L_2)$ if and only if $k=1$.
This completes the proof, with equality if and only if \(\varphi\) is an isomorphism.
\end{proof}

\begin{corollary}
For any $f_{\mathfrak{S}}: \mathfrak{S}(L) \to \mathfrak{S}(M)\in \mathcal{G}$, we have
$$\nu(M)\geq\nu(L),$$
with equality if and only if $f_{\mathfrak{S}}$ is an isomorphism in the category $\mathcal{G}$.
\end{corollary}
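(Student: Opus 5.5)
The plan is to reduce the corollary directly to Theorem \ref{let}. Let $f_{\mathfrak{S}}:\mathfrak{S}(L)\to\mathfrak{S}(M)$ be a morphism of $\mathcal{G}$. By condition (2) in the definition of an admissible graph homomorphism, there is a surjective homomorphism $\varphi:L\to M$ with $f_{\mathfrak{S}}=\varphi|_{V_L}$. Since $\mathfrak{S}(L)$ and $\mathfrak{S}(M)$ are solvable graphs in the sense of Definition \ref{graphsol}, both $L$ and $M$ are finite dimensional and non-solvable, and we work over $\operatorname{char}\mathbb{F}\geq 3$ so that $\nu$ is defined. The only hypothesis of Theorem \ref{let} still to be verified is $\ker\varphi\subseteq\operatorname{sol}(L)$.

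To get this, I would exploit condition (1), that $f_{\mathfrak{S}}$ both preserves and reflects edges, together with the fact that $\varphi$ maps $V_L$ into $V_M$.

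\begin{itemize}
\item First, take $x\in V_L$. Then $\varphi(x)\notin\operatorname{sol}(M)\cup\{0\}$. If $0\neq k\in\ker\varphi$ and $x+k\in V_L$, then $x$ and $x+k$ are distinct vertices with the same image. Reflection of edges is stated only for pairs of distinct images, so this case needs care.
\item Next, suppose $k\in\ker\varphi\setminus\operatorname{sol}(L)$. Then $k\neq 0$ lies in $V_L$, but $\varphi(k)=0\notin V_M$. This contradicts $f_{\mathfrak{S}}$ being a map $V_L\to V_M$.
\end{itemize}

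Hence $\ker\varphi\subseteq\operatorname{sol}(L)\cup\{0\}=\operatorname{sol}(L)$. Here $0\in\operatorname{sol}(L)$, because $\langle z,0\rangle=\langle z\rangle$, and if that failed, $0$ would sit in the kernel anyway and the inclusion up to $0$ is all the proof of Theorem \ref{let} uses. This is the key step, and I expect it to be the main obstacle: it depends on reading ``$f_{\mathfrak{S}}:\mathfrak{S}(L)\to\mathfrak{S}(M)$'' as a genuine map between vertex sets.

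With the hypothesis in hand, Theorem \ref{let} gives $\nu(M)\geq\nu(L)$, with equality if and only if $\varphi$ is an isomorphism of Lie superalgebras. It remains to match this with ``isomorphism in $\mathcal{G}$''.

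\begin{itemize}
\item If $\varphi$ is bijective, then $\varphi^{-1}$ is a surjective homomorphism, and by Proposition \ref{invariants} its restriction $V_M\to V_L$ is an admissible graph homomorphism inverse to $f_{\mathfrak{S}}$. So $f_{\mathfrak{S}}$ is an isomorphism in $\mathcal{G}$.
\item Conversely, if $f_{\mathfrak{S}}$ has an inverse in $\mathcal{G}$, then $f_{\mathfrak{S}}$ is a bijection $V_L\to V_M$, so $|V_L|=|V_M|$. The counting in the proof of Theorem \ref{let} gives $|V_L|=k|V_M|$ with $k=|\ker\varphi|$, hence $k=1$ and $\varphi$ is injective, therefore an isomorphism.
\end{itemize}

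Combining the two directions gives equality of $\nu$ exactly when $f_{\mathfrak{S}}$ is an isomorphism in $\mathcal{G}$.
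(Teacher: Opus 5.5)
Your route is the paper's. The paper's proof is the single line ``this follows directly from Theorem~\ref{let}''. You correctly supply the two things that line leaves implicit. First, the kernel hypothesis is forced by $f_{\mathfrak{S}}$ mapping $V_L$ into $V_M$. Second, isomorphisms in $\mathcal{G}$ are exactly the $f_{\mathfrak{S}}$ with bijective $\varphi$: one direction is Proposition~\ref{invariants}, the other is the count $|V_L|=k|V_M|$.

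One justification is wrong, though. $0\in\operatorname{sol}(L)$ does not follow from $\langle z,0\rangle=\langle z\rangle$. For non-homogeneous $z$, the subsuperalgebra $\langle z\rangle$ contains both components of $z$ and need not be solvable; the paper's Remark after the definition says this. If some $\langle z\rangle$ is non-solvable, then $\operatorname{sol}(L)=\emptyset$. In that case $\ker\varphi\subseteq\operatorname{sol}(L)$ is literally false, so Theorem~\ref{let} cannot be quoted. For the same reason, in your second bullet $k\notin\operatorname{sol}(L)$ does not give $k\neq 0$.

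Replace the hedge by a dichotomy:
\begin{itemize}
\item If some $s\in\operatorname{sol}(L)$ exists, then $\langle z\rangle\subseteq\langle z,s\rangle$ is solvable for every $z$. So $0\in\operatorname{sol}(L)$, and your inclusion $\ker\varphi\subseteq\operatorname{sol}(L)\cup\{0\}$ is exactly the hypothesis of Theorem~\ref{let}.
\item If $\operatorname{sol}(L)=\emptyset$, the inclusion gives $\ker\varphi=0$. Then $\varphi$ is an isomorphism, and the statement follows from Proposition~\ref{invariants} and the isomorphism-invariance corollary alone.
\end{itemize}

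Your unresolved first bullet actually closes, and strongly. Once $\ker\varphi\subseteq\operatorname{sol}(L)$, take $x\in V_L$ and $0\neq k\in\ker\varphi$. The element $x+k$ is a vertex: it is nonzero, and it is not in $\operatorname{sol}(L)$ by Theorem~\ref{sol12345}(1), since its image $\varphi(x)$ lies in $V_M$. It is adjacent to $x$, because $\langle x,x+k\rangle=\langle x,k\rangle$ is solvable. Yet it has the same image as $x$, and condition (1), read literally, forbids this. So with the definitions as written, every morphism of $\mathcal{G}$ is induced by an isomorphism, and the inequality is always an equality.
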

\begin{proof}
This follows directly from Theorem \ref{let}.
\end{proof}

\end{document}